\newtheorem{thm}{Theorem}[section]
\newtheorem{lem}[thm]{Lemma}
\newtheorem{prop}[thm]{Proposition}
\newtheorem{cor}[thm]{Corollary}
\theoremstyle{definition}
\newtheorem{rem}[thm]{Remark}
\newtheorem{defn}[thm]{Definition}
\newtheorem{ex}[thm]{Example}
\def\ie{\emph{i.e.}}
\def\V{\mathcal{V}}
\def\R{\mathcal{R}}
\def\K{\mathcal{K}}
\def\E{\mathcal{E}}
\def\BP{\mathit{BP}}
\def\Z{\mathbb{Z}}
\def\lra{\longrightarrow}
\def\id{\mathrm{id}}
\def\leq{\leqslant}
\def\geq{\geqslant}
\def\ra{\rightarrow}
\def\bp1{\BP\langle 1\rangle}
\begin{document}
\title[An involution on the $K$-theory of bimonoidal
categories]{An involution on the $K$-theory of bimonoidal categories
  with anti-involution}
\author{Birgit Richter}
\address{Department Mathematik der Universit\"at Hamburg,
Bundesstra{\ss}e 55, 20146 Hamburg, Germany}
\email{richter@math.uni-hamburg.de}
\keywords{Algebraic $K$-theory, topological $K$-theory, Waldhausen
  $A$-theory, involution} 
\subjclass[2000]{Primary 55S25; Secondary 19D10} 

\thanks{ The author would like to thank 
Kobe University for the hospitality during her stay in March 2008 which 
was partially supported by Grant-in-Aid for Scientific Research (C) 
19540127 of the Japan Society for the Promotion of Science. She thanks 
Christian Ausoni for asking a question that led to an important 
correction and Hannah K\"onig for 
spotting some annoying typos.  \\ \today} 

\begin{abstract} We construct a 
combinatorially defined involution on the algebraic $K$-theory of the ring 
spectrum associated to a bimonoidal category with anti-involution. 
Particular examples of such are braided bimonoidal categories. We 
investigate examples such as $K(ku)$, $K(ko)$, and Waldhausen's $A$-theory 
of spaces of the form $BBG$, for abelian groups $G$. We show that the 
involution agrees with the classical one for a bimonoidal category 
associated to a ring and prove that it is not trivial in the above 
mentioned examples. 
\end{abstract} 

\maketitle

\section{Introduction}
Several multiplicative cohomology theories possess a spectrum model that is the
ring spectrum associated to a bimonoidal category.
The passage from bimonoidal categories to spectra uses the additive
structure of the bimonoidal category; its multiplication is then
used to obtain the ring structure. For instance, in the case of
singular cohomology with coefficients in a ring $R$, $H^*(-;R)$,
we can view the ring $R$ as a discrete bimonoidal category. The
associated spectrum is the Eilenberg-Mac\,Lane spectrum of the ring
$R$, $HR$. In general, we denote the spectrum associated to a
bimonoidal category $\R$ by $H\R$.

The main result of \cite{BDRR} identifies the algebraic $K$-theory
of $H\R$ with an algebraic $K$-theory construction defined in
\cite{BDR}, $\K(\R)$, which
uses the ring-like features of $\R$, namely uses addition \emph{and}
multiplication in $\R$ to build $K$-theory. We will recall the
construction of $\K(\R)$ in Section \ref{sec:barconstr}.

In some  examples, one can therefore read off some extra
structure on $\K(\R)$ using this equivalence. For instance, if $R$
is a ring with anti-involution, then there is an involution on
the $K$-theory of the ring
$R$ and this yields an involution on
$$ \K(\R_R) = K(HR) = K(R)$$
where $\R_R$ denotes the discrete category associated to the ring
$R$. For the bipermutative category of complex vector spaces,
$\mathcal{V}_{\mathbb{C}}$, we obtain that
$$ \K(\mathcal{V}_{\mathbb{C}}) \sim K(H\mathcal{V}_{\mathbb{C}}) = K(ku)$$
where $ku$ denotes the connective spectrum associated to complex
topological $K$-theory. As complex conjugation gives rise to an
action of the group of order two on $ku$ we obtain an induced action
of $\mathbb{Z}/2\mathbb{Z}$ on $K(ku)$ and hence on
$\K(\mathcal{V}_{\mathbb{C}})$.

The aim of this paper is to place these two examples in a broader
context and to investigate further examples. On the one hand we will construct
an involution on $\K(\R)$
for every strictly bimonoidal category with anti-involution. 
Particular examples of such categories are braided bimonoidal
categories. Hence in the special case where the braiding is symmetric
we obtain bipermutative categories as a class of examples. We prove
that in the classical case of $K$-theory of a ring with
anti-involution our involution coincides with the classical one.
Furthermore, we will consider bimonoidal categories with
group actions and investigate how these relate to the constructed
involution. We close with the example of the involution on
Waldhausen's $A$-theory of a space $X$ for spaces of the form $X =
BBG$ for an abelian group $G$. We show that in several cases such as
$K(ko)$, $K(ku)$ and $A(BBG)$,  our involution is non-trivial.

The advantage of our construction of an involution is that it is
relatively easy to describe: it is of a purely combinatorial nature
that mimics the construction of the involution on the algebraic
$K$-theory of rings with anti-involution.

\section{$K$-theory of bimonoidal
categories}\label{sec:barconstr}

Roughly speaking, a (strict)
bimonoidal category $\R$ is a category with two binary
operations, $\otimes$ and $\oplus$, that let $\R$ behave like a
\emph{rig} -- a ring without additive inverses. More precisely, for
each pair  of objects $A,B$ in $\R$ there are objects $A \oplus B$ and
$A \otimes B$ in $\R$ and we assume strict associativity for both
operations.
There  are objects $0_\R \in \R$ and $1_\R \in \R$ that are strictly
neutral with respect to $\oplus$ resp.~$\otimes$ and there are
isomorphisms $c_\oplus^{A,B} \colon A \oplus B \ra B
\oplus A$ with $c_\oplus^{B,A} \circ c_\oplus^{A,B} = \mathrm{id}$.
Everything in sight is natural and satisfies coherence conditions. The
addition $\oplus$ and the multiplication $\otimes$ are related via
distributivity laws.

The complete list of axioms can be found
in \cite[definition 3.3]{EM}, with the slight difference that we
demand the left distributivity map
$$ d_\ell \colon A \otimes B \oplus A' \otimes B \rightarrow (A \oplus
A') \otimes B$$ to be the identity and $d_r$ to be a natural
isomorphism. Similarly to  \cite[VI, Proposition 3.5]{M} one can see
that every bimonoidal category is equivalent to a
strict one, so there it is no loss of generality to assume
strictness.

The ring-like features of bimonoidal categories allow it to consider
matrices and algebraic $K$-theory of such categories. In the following
we recall some definitions and results from \cite{BDR}.
\begin{defn} \cite[definition 3.2]{BDR}
The \emph{category of $n \times n$-matrices over $\R$}, $M_n(\R)$, is
defined
as follows. The objects of $M_n(\R)$ are matrices
$A = {(A_{i,j})}_{i,j=1}^n$ of objects of $\R$ and morphisms from $A =
{(A_{i,j})}_{i,j=1}^n$ to $C = {(C_{i,j})}_{i,j=1}^n$ are
matrices  $\phi = {(\phi_{i,j})}_{i,j=1}^n$ where each $\phi_{i,j}$ is a
morphism in $\R$ from $A_{i,j}$ to $C_{i,j}$.
\end{defn}
\begin{lem} \label{lem:monoidalmat} \cite[proposition 3.3]{BDR}
For a bimonoidal category $(R, \oplus, 0_\R, c_\oplus, \otimes,
1_\R)$  the category $M_n(\R)$ is a monoidal category with respect
to the matrix multiplication bifunctor
\begin{align*}
M_n(\R) \times M_n(\R) & \stackrel{\cdot}{\lra}  M_n(\R)\\
{(A_{i,j})}_{i,j=1}^n \cdot  {(B_{i,j})}_{i,j=1}^n & =
{(C_{i,j})}_{i,j=1}^n \, \text{ with } C_{i,j} = \bigoplus_{k=1}^n
A_{i,k} \otimes B_{k,j}
\end{align*}
The unit of this structure is given by the unit matrix object $E_n$ which has
$1_\R$ as diagonal entries and $0_\R$ in the other places.
\end{lem}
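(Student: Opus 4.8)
The plan is to establish bifunctoriality, then the unit isomorphisms, then the associativity isomorphism, and finally the pentagon and triangle coherence axioms. First I would promote $\cdot$ to a bifunctor. On objects it is prescribed; on a pair of morphisms $\phi\colon A\to A'$ and $\psi\colon B\to B'$ I would set $(\phi\cdot\psi)_{i,j}=\bigoplus_{k=1}^n \phi_{i,k}\otimes\psi_{k,j}$, which is legitimate precisely because $\otimes$ and $\oplus$ are bifunctors on $\R$. Compatibility with composition and with identities then follows entrywise from the corresponding properties of $\otimes$ and $\oplus$; this is the routine part and uses nothing beyond the functoriality already available in $\R$.

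Next I would produce the unit constraints. Expanding $(E_n\cdot A)_{i,j}=\bigoplus_{k=1}^n (E_n)_{i,k}\otimes A_{k,j}$, the diagonal summand is $1_\R\otimes A_{i,j}=A_{i,j}$ by strict multiplicative unitality, whereas every off-diagonal summand is $0_\R\otimes A_{k,j}$, which the annihilation (multiplicative-zero) isomorphism of $\R$ identifies with $0_\R$. Since $0_\R$ is a strict additive unit these summands vanish, leaving a natural isomorphism $\lambda_A\colon E_n\cdot A\to A$; the mirror computation yields $\rho_A\colon A\cdot E_n\to A$.

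For associativity I would compare the two bracketings entrywise. Because $d_\ell$ is the identity, the outer factor in $((A\cdot B)\cdot C)_{i,j}=\bigoplus_l\big(\bigoplus_k A_{i,k}\otimes B_{k,l}\big)\otimes C_{l,j}$ distributes strictly over the inner sum, and strict associativity of $\otimes$ rewrites each summand as $A_{i,k}\otimes(B_{k,l}\otimes C_{l,j})$; dually the genuine right-distributivity $d_r$ expands $(A\cdot(B\cdot C))_{i,j}=\bigoplus_k A_{i,k}\otimes\big(\bigoplus_l B_{k,l}\otimes C_{l,j}\big)$ into the very same summands. The only discrepancy is the order of the double summation $\bigoplus_l\bigoplus_k$ against $\bigoplus_k\bigoplus_l$, which I would reconcile by the additive symmetry $c_\oplus$; assembling $d_r$ and $c_\oplus$ gives the associativity isomorphism $\alpha_{A,B,C}$, natural in all three variables.

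The hard part will be the coherence axioms. Since $\alpha$, $\lambda$ and $\rho$ are built solely from $d_r$, $c_\oplus$, the annihilation isomorphisms and the strict units of $\R$, the pentagon and triangle reduce to formal diagrams in the coherence data of the bimonoidal category, and these commute by the coherence theorem for bimonoidal (rig) categories in the sense of Laplaza; alternatively one can verify them directly against the axiom list in \cite[Definition 3.3]{EM}. I expect the pentagon to be the delicate step, since it superimposes the index transposition coming from $c_\oplus$ on the nested applications of $d_r$ across a four-fold product, so the real work lies in tracking which instances of $c_\oplus$ and $d_r$ occur and invoking the hexagon and distributivity coherences of $\R$ to see that the two composites agree.
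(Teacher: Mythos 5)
Your proposal is correct and follows essentially the same route the paper takes (the lemma itself is quoted from \cite[proposition 3.3]{BDR}, but the paper's later discussion of the associativity isomorphism $\alpha = d_\ell \circ \sigma \circ d_r^{-1}$ in \eqref{eq:alpha} and its remark that the pentagon follows by brute-force comparison using \cite[definition 3.3]{EM} is exactly your construction: $d_\ell = \id$, one genuine instance of $d_r$, and the additive twist reordering the double sum). The only cosmetic difference is that you invoke an annihilation isomorphism $0_\R \otimes X \cong 0_\R$ for the unit constraints, whereas in the conventions used here the multiplicative zero is strict, so $E_n$ is in fact a strict unit; this only simplifies, and does not affect, your argument.
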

In the following we will assume that the category $\R$ is small.
As  $\R$ is bimonoidal, its set of path components $\pi_0(\R)$ has a
structure of a rig, and its
group completion, $Gr(\pi_0(\R)) = (-\pi_0\R)\pi_0\R$, is a ring.
\begin{defn} \cite[definition 3.4]{BDR}
We define the monoid of \emph{invertible $n \times n$-matrices over
$\pi_0(\R)$}, $GL_n(\pi_0(\R))$, to be the $n \times n$-matrices
over $\pi_0(\R)$ that are invertible as matrices over
$Gr(\pi_0(\R))$.
\end{defn}
Note that $GL_n(\pi_0(\R))$ is the pullback in the diagram
$$\xymatrix{
{GL_n(\pi_0R)} \ar[r] \ar@{ >->}[d] & {GL_n(Gr(\pi_0R))} \ar@{ >->}[d] \\
{M_n(\pi_0R)} \ar[r] & {M_n(Gr(\pi_0R))}
}$$

For instance, if $\pi_0(\R)$ is the rig of natural numbers including
zero, $\mathbb{N}_0$, then the elements in $GL_n(\mathbb{N}_0)$ are
$n\times n$-matrices over $\mathbb{N}_0$ that are
invertible if they are considered as matrices with integral entries,
\ie, $GL_n(\mathbb{N}_0) = M_n(\mathbb{N}_0) \cap
GL_n(\mathbb{Z})$ consists of matrices in $M_n(\mathbb{N}_0)$
with determinant $\pm 1$.

\begin{defn} \cite[definition 3.6]{BDR}
The \emph{category of weakly invertible  $n \times n$-matrices over $\R$},
$GL_n(\R)$, is the full subcategory of $M_n(\R)$ with objects all
matrices $A = {(A_{i,j})}_{i,j=1}^n \in M_n(\R)$ whose matrix of
$\pi_0$-classes $[A] = {([A_{i,j}])}_{i,j=1}^n$ is contained in
$GL_n(\pi_0(\R))$.
\end{defn}
Matrix multiplication is compatible with the property of being
weakly invertible and hence the category $GL_n(\R)$ inherits a
monoidal structure from $M_n(\R)$.

We recall the definition of the bar construction of monoidal
categories from \cite[definition 3.8]{BDR}.

\begin{defn}
  Let $(\mathcal{C}, \cdot, 1)$ be a monoidal category.
  The \emph{bar construction of $\mathcal{C}$}, $B(\mathcal C)$,
  is a simplicial category.
  Let $[q]$ be the ordered set
  $[q] = \{0<1<\ldots<q\}$.  An object  $A$
in $B_q(\mathcal{C})$ consists of the following data.
\begin{enumerate}
  \item For each $0\leq i<j\leq q$
  there is an object $A^{ij}$ in $\mathcal{C}$.
\item For each $0\leq i<j<k\leq q$ there is an isomorphism
$$\phi^{ijk} \colon A^{ij} \cdot A^{jk} \ra A^{ik}$$
in $\mathcal{C}$ such that for all $0\leq i<j<k<l\leq q$ the
following diagram commutes
$$\xymatrix{
{(A^{ij}\cdot A^{jk})\cdot A^{kl}} \ar[d]_{\phi^{ijk}\cdot
    \id}\ar[rr]^{\cong} & &
{A^{ij}\cdot (A^{jk}\cdot A^{kl})} \ar[d]^{\id\cdot \phi^{jkl}}\\
{A^{ik}\cdot A^{kl}} \ar[r]^{\phi^{ikl}} & {A^{il}}& {A^{ij}\cdot
A^{jl}.}\ar[l]_{\phi^{ijl}} }
$$
  \end{enumerate}
A morphism $f\colon A\ra B$ in $B_q\mathcal{C}$ consists of
morphisms $f^{ij}\colon A^{ij}\to B^{ij}$ in $\mathcal{C}$ such that
for all $0\leq i<j<k\leq q$
$$f^{ik}\phi^{ijk}=\psi^{ijk}(f^{ij}\cdot f^{jk})\colon A^{ij}\cdot
A^{jk}\to B^{ik}.$$ Here, the $\psi^{ijk}\colon B^{ij}\cdot B^{jk}
\ra B^{ik}$ denote the structure maps of $B$.

The simplicial structure is as follows: if $\varphi\colon
[q]\to[p]\in\Delta$ the functor $\varphi^*\colon B_p(\mathcal{C}) \ra
B_q(\mathcal{C})$ is obtained by precomposing with $\varphi$. In
order to allow for degeneracy maps $s_i$ we use the convention that
all objects of the form $A^{ii}$ are the unit of the monoidal
structure.
\end{defn}

The $K$-theory of the bimonoidal category $\R$ can now be defined as
usual. We take the bar constructions of the monoidal categories
$GL_n\R$ for all $n \geq 0$, realise them, take the disjoint union
of all of these and group complete.

\begin{defn} \cite[definition 3.12]{BDR}
For any bimonoidal category $\R$ its \emph{$K$-theory} is
$$ \K(\R) = \Omega B (\bigsqcup_{n \geq 0} |BGL_n\R|).$$
\end{defn}
Note that $\K(\R)$  is weakly equivalent to
$$K^f_0((-\pi_0\R)\pi_0\R)\times |BGL\R|^+,$$
where $K^f_0((-\pi_0\R)\pi_0\R)$ denotes the free $K$-theory of the
ring  $(-\pi_0\R)\pi_0\R = Gr(\pi_0\R)$.

The main result of \cite[theorem 1.1]{BDRR} is the identification of
$\K(\R)$ with the algebraic $K$-theory of the ring spectrum
associated to $\R$, $H\R$, if $\R$ is a small
 topological bimonoidal category
satisfying the following conditions:
\begin{itemize}
\item
All morphisms in $\R$ are isomorphisms.
\item
For every object $X \in \R$ the translation functor $X \oplus (-)$
is faithful.
\end{itemize}

\section{Bimonoidal categories with anti-involution}
In order to define an involution of $\K(\R)$ we need to assume some
extra structure on our bimonoidal category $\R$, namely the
existence of an anti-involution on $\R$. David Barnes considers
involutions on monoidal categories in \cite[section 7]{B}. We have
to incorporate the full bimonoidal structure, but some of our axioms
below relate to his.

\begin{defn} \label{def:antiinv}
An \emph{anti-involution} in a strictly bimonoidal category $\R$
consists of a functor $\zeta\colon \R \ra \R$ with $\zeta \circ
\zeta = \id$ and such that there are natural isomorphisms
\begin{equation}
\mu_{A,B} \colon \zeta(A \otimes B) \ra \zeta(B) \otimes \zeta(A)
\end{equation}
for all $A, B \in \R$. In addition, the functor $\zeta$ and the
isomorphisms $\mu$  have to satisfy the following properties.
\begin{enumerate}
\item
The functor $\zeta$ is strictly symmetric monoidal with respect to
$(\R, \oplus, 0_\R, c_\oplus)$ \cite[XI.2]{ML}. 
\item
The multiplicative unit $1_\R$ is fixed under $\zeta$, \ie,
$\zeta(1_\R) = 1_\R$ and $\mu_{1_\R, A} = \mathrm{id}_{\zeta(A)} =
\mu(A,1_\R)$.
\item
The isomorphisms $\mu$ are associative in the sense that the diagram
$$\xymatrix{
{\zeta(A \otimes B \otimes C)} \ar[rr]^{\mu_{A\otimes B, C}}
\ar[d]_{\mu_{A, B\otimes C}}& &
{\zeta(C) \otimes \zeta(A \otimes B)} \ar[d]^{\id \otimes \mu_{A,B}}\\
{\zeta(B \otimes C) \otimes \zeta(A)} \ar[rr]^{\mu_{B,C} \otimes
\id}& & {\zeta(C) \otimes \zeta(B) \otimes \zeta(A)} }$$ commutes
for all $A, B, C \in \R$.
\item
The distributivity isomorphisms $d_\ell$ and $d_r$ and the
isomorphisms $\mu$ render the following diagrams commutative
$$\xymatrix{
{\zeta(A \otimes B \oplus A \otimes C)} \ar[r]^{\zeta(d_r)}
\ar[d]_{\mu_{A \otimes B} \oplus \mu_{A \otimes C}}&
{\zeta(A \otimes (B \oplus C))} \ar[d]^{\mu_{A,B\oplus C}}\\
{\zeta(B) \otimes \zeta(A) \oplus \zeta(C) \otimes \zeta(A)}
\ar[r]^{d_\ell} & {(\zeta(B) \oplus \zeta(C)) \otimes \zeta(A),} }$$
$$\xymatrix{
{\zeta(A \otimes C \oplus B \otimes C)} \ar[r]^{\zeta(d_\ell)}
\ar[d]_{\mu_{A \otimes C} \oplus \mu_{B \otimes C}}&
{\zeta((A \oplus B) \otimes C)} \ar[d]^{\mu_{A \oplus B, C}}\\
{\zeta(C) \otimes \zeta(A) \oplus \zeta(C) \otimes \zeta(B)}
\ar[r]^{d_r} & {\zeta(C) \otimes (\zeta(A) \oplus \zeta(B)).} }$$
\end{enumerate}
\end{defn}

For a bimonoidal category with anti-involution $(\R, \zeta, \mu)$ the
objects that are fixed under the anti-involution $\zeta$ do not form a
bimonoidal category in general. They carry a permutative structure
with respect to $\oplus$.

\begin{rem}
In the case of rings an anti-involution is a map from a ring $R$ to
the ring $R^o$ where $R^o$ has the same additive structure as $R$ but
has reversed multiplication. In a similar spirit one can define a
bimonoidal  category $\R^o$ for any bimonoidal category $\R$ where the
multiplicative structure is reversed. However, the left distributivity in
$\R^o$ is then
no identity any longer because it corresponds to the right
distributivity law in $\R$.  For a bimonoidal category with
anti-involution, $\R$, $\zeta$ can be viewed as a lax morphisms of bimonoidal
categories from $\R$ to $\R^o$  in adaptation of \cite[definition
2.6]{BDRR} to a setting with $d_\ell \neq \id$.
\end{rem}
\begin{defn} \label{def:morbimantiinv}
A \emph{morphism of bimonoidal categories with anti-involution},
$F\colon (\R, \zeta, \mu) \ra (\R',\zeta',\mu')$,  is a lax
bimonoidal functor $F\colon \R \rightarrow \R'$ with the additional
properties that
$$ F \circ \zeta = \zeta' \circ F$$
and that $\mu$ and $\mu'$ are compatible with the transformations
\begin{equation}\label{eq:lambdas}
\lambda^{A,B} \colon F(A) \otimes F(B) \rightarrow F(A \otimes B)
\end{equation}
in the sense that the diagram
$$\xymatrix{
{F(\zeta(A)) \otimes F(\zeta(B))} \ar[r]^{\lambda^{A,B}} \ar@{=}[d] &
{F(\zeta(A) \otimes \zeta(B))}  & {F(\zeta(B \otimes A))}
\ar[l]_(0.45){F(\mu)} \ar@{=}[d]\\
{\zeta'(F(A)) \otimes \zeta'(F(B))} & {\zeta'(F(B) \otimes F(A))}
\ar[r]^{\zeta'(\lambda)}\ar[l]_(0.45){\mu'} & {\zeta'(F(B \otimes
A))}
 }$$ commutes for all $A,B$ in $\R$.
 \end{defn}

We prolong the anti-involution $\zeta$ to the category of matrices
$M_n(\R)$ coordinatewise, so for any $A= (A_{i,j})_{i,j} \in
M_n(\R)$
$$ \zeta((A_{i,j})_{i,j}) = (\zeta(A_{i,j}))_{i,j}.$$

If the matrix $A$ is an element in $GL_n(\R)$ then so is $\zeta(A)$
and $\zeta(E_n) = E_n$.

\section{The anti-involution on $\K(\R)$}
Regardless of the special form of the bimonoidal category with
anti-involution $(\R, \zeta, \mu)$, the combinatorial nature of the bar
construction $BGL(\R)$ allows for a canonical involution map.

In the following $\R$ is always a fixed bimonoidal category with
anti-involution.

\begin{defn}
For a matrix of objects $A \in M_n(\R)$ the
\emph{transpose of $A$}, $A^t$, has $A^t_{i,j} = A_{j,i}$ as entries. For a
morphism $\phi\colon A \ra C$ in $M_n(\R)$ we define $\phi^t$ as
$$ \phi^t_{i,j} := \phi_{j,i} \colon A_{j,i} = A^t_{i,j} \ra C^t_{i,j} =
C_{j,i}. $$
\end{defn}

For a general bimonoidal category, the formula that we are used to,
namely $(A\cdot B)^t = B^t \cdot A^t$ does not hold on the nose, but
only up to a twist. We have
$$ (A\cdot B)^t_{i,j} = (A\cdot B)_{j,i} = \bigoplus_{k=1}^n A_{j,k} \otimes B_{k,i}$$
whereas
$$ (B^t \cdot A^t)_{i,j} = \bigoplus_{k=1}^n B^t_{i,k} \otimes A^t_{k,j} =
\bigoplus_{k=1}^n B_{k,i} \otimes A_{j,k}.$$

Using the structure maps $\mu$ of the anti-involution on $\R$, we
can then define $\mu = \bigoplus_{k=1}^n \mu^{A_{j,k}, B_{k,i}}$ and
obtain a natural map from $(\zeta(A \cdot B))^t$ to $\zeta(B)^t
\cdot \zeta(A)^t$. The map $\mu$ behaves well on morphisms.

\begin{lem} \label{lem:natural}
For morphisms $\phi\colon A \ra C$ and $\psi\colon B \ra D$ in
$M_n(\R)$ the following diagram commutes
$$\xymatrix{
{(\zeta(A \cdot B))^t} \ar[r]^{\mu} \ar[d]_{(\zeta(\phi \cdot
\psi))^t} &
{\zeta(B)^t \cdot \zeta(A)^t} \ar[d]^{\zeta(\psi)^t \cdot \zeta(\phi)^t}\\
{(\zeta(C \cdot D))^t} \ar[r]^{\mu}  & {\zeta(D)^t \cdot \zeta(C)^t}
}$$
\end{lem}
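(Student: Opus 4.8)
The plan is to check commutativity one matrix entry at a time and thereby reduce the statement to the naturality of the isomorphisms $\mu_{A,B}$ in $\R$ together with property (1) of Definition \ref{def:antiinv}. Fix $1\le i,j\le n$. First I would record the $(i,j)$-entries of the four corners. Since $\zeta$ is applied coordinatewise and transposition swaps the indices, the top-left corner has
$$\bigl(\zeta(A\cdot B)\bigr)^t_{i,j} = \zeta\bigl((A\cdot B)_{j,i}\bigr) = \bigoplus_{k=1}^n \zeta(A_{j,k}\otimes B_{k,i}),$$
where the last equality uses that $\zeta$ is strictly symmetric monoidal with respect to $\oplus$, so that it commutes strictly with the finite direct sum defining the matrix product. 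In the same way the top-right corner has $(i,j)$-entry $\bigoplus_{k=1}^n \zeta(B_{k,i})\otimes\zeta(A_{j,k})$, and the horizontal map $\mu$ restricts on this entry to the direct sum $\bigoplus_{k=1}^n \mu^{A_{j,k},B_{k,i}}$ by its very definition.

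For the vertical maps I would argue analogously. Because $\zeta$ is a functor and is additive, the left-hand map restricts on the $(i,j)$-entry to
$$\bigl(\zeta(\phi\cdot\psi)\bigr)^t_{i,j} = \zeta\bigl((\phi\cdot\psi)_{j,i}\bigr) = \bigoplus_{k=1}^n \zeta(\phi_{j,k}\otimes\psi_{k,i}),$$
while the right-hand map restricts to $\bigoplus_{k=1}^n \zeta(\psi_{k,i})\otimes\zeta(\phi_{j,k})$. All four maps are thus direct sums indexed by the same set $k=1,\dots,n$, with matching summands, so the $(i,j)$-entry of the whole square decomposes as a direct sum over $k$ of the squares whose $k$-th summand has horizontal maps $\mu^{A_{j,k},B_{k,i}}$ and $\mu^{C_{j,k},D_{k,i}}$ and vertical maps $\zeta(\phi_{j,k}\otimes\psi_{k,i})$ and $\zeta(\psi_{k,i})\otimes\zeta(\phi_{j,k})$.

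Finally I would observe that each of these summand squares is precisely the naturality square of the natural transformation $\mu_{-,-}$ evaluated at the pair of morphisms $\phi_{j,k}\colon A_{j,k}\ra C_{j,k}$ and $\psi_{k,i}\colon B_{k,i}\ra D_{k,i}$: naturality of $\mu$ in both variables jointly gives $\mu_{C_{j,k},D_{k,i}}\circ\zeta(\phi_{j,k}\otimes\psi_{k,i}) = (\zeta(\psi_{k,i})\otimes\zeta(\phi_{j,k}))\circ\mu_{A_{j,k},B_{k,i}}$, using that $\zeta$ is a functor so that $\zeta(\phi_{j,k}\otimes\psi_{k,i})$ is the value of $\zeta$ on $\phi_{j,k}\otimes\psi_{k,i}$. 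Since a direct sum of commuting squares commutes, the $(i,j)$-entry commutes, and as $i,j$ were arbitrary the lemma follows. The argument is essentially bookkeeping; the only genuine input is property (1), which lets $\zeta$ pass strictly through the direct sums so that the two composites are summed over the same index set with matching summands. The main place to be careful is the index juggling created by the transpose (which exchanges $i$ and $j$) and the reversal of the tensor factors built into $\mu$, where an ordering slip is easy to make.
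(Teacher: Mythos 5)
Your proposal is correct and follows essentially the same route as the paper: both reduce to the $(i,j)$-entry, write each corner as a direct sum over $k$ (using that $\zeta$ is strictly additive), and conclude by naturality of $\mu$ applied summandwise. The extra bookkeeping you spell out (the transpose swapping indices and the reversal of tensor factors) is exactly the content implicit in the paper's displayed component diagram.
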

\begin{proof}
The $(i,j)$ matrix component of the diagram above is
$$\xymatrix{
{\bigoplus_{k=1}^n \zeta(A_{j,k} \otimes B_{k,i})}
\ar[rr]^{\bigoplus_k
  \mu^{A_{j,k},B_{k,i}}}
\ar[d]_{\bigoplus_k \zeta(\phi_{j,k} \otimes \psi_{k,i})} & &
{\bigoplus_{k=1}^n \zeta(B_{k,i}) \otimes \zeta(A_{j,k})}
\ar[d]^{\bigoplus_k
  \zeta(\psi_{k,i}) \otimes \zeta(\phi_{j,k})}\\
{\bigoplus_{k=1}^n \zeta(C_{j,k} \otimes D_{k,i})}
\ar[rr]^{\bigoplus_k
  \mu^{C_{j,k},D_{k,i}}} &  & {\bigoplus_{k=1}^n
  \zeta(D_{k,i}) \otimes \zeta(C_{j,k})}
}$$ and this commutes because $\mu$ is natural.
\end{proof}
\begin{defn}
Let
$$
\begin{array}{cccc}
& A^{0,1} & \ldots & A^{0,q} \\
  & & \ddots & \vdots \\
  & & & A^{q-1,q}
\end{array}$$
together with coherent isomorphisms $\phi^{i,j,k}\colon A^{i,j}
\cdot A^{j,k} \rightarrow A^{i,k}$, $0 \leq i < j < k \leq q$, be an
element in $B_qGL_n(\R)$.

We define $\tau \colon B_qGL_n(\R) \rightarrow  B_qGL_n(\R)$ via
$$ \tau \colon \begin{array}{cccc}
& A^{0,1} & \ldots & A^{0,q} \\
  & & \ddots & \vdots \\
  & & & A^{q-1,q}
\end{array} \mapsto \begin{array}{cccc}
& (\zeta(A^{q-1,q}))^t & \ldots & (\zeta(A^{0,q}))^t \\
  & & \ddots & \vdots \\
  & & & (\zeta(A^{0,1}))^t.
\end{array}$$
Let $B^{i,j}$ denote $(\zeta(A^{q-j,q-i}))^t$. The corresponding
isomorphisms $\tau(\phi)^{i,j,k} \colon B^{i,j} \cdot B^{j,k}
\rightarrow B^{i,k}$ for $0 \leq i <j<k\leq q$ are given by
\begin{equation} \label{eq:cohisos}
\xymatrix{{\tau(\phi)^{i,j,k} \colon \zeta(A^{q-j,q-i}))^t\cdot
(\zeta(A^{q-k,q-j}))^t}
\ar[d]^(0.5){\mu^{-1}} & &\\
{(\zeta(A^{q-k,q-j}\cdot A^{q-j,q-i}))^t}
\ar[rr]^(0.55){(\zeta(\phi^{q-k,q-j,q-i}))^t} &&
{(\zeta(A^{q-k,q-i}))^t.}}
\end{equation}
\end{defn}
Let $\alpha= \alpha_{A,B,C} \colon A \cdot (B \cdot C) \lra (A \cdot
B) \cdot C$ be the natural associativity isomorphism in the monoidal
structure of $(GL_n\R, \cdot, E_n)$. We can express $\alpha$ in
terms of distributivity maps and additive twist maps as follows: let
$\sigma$ be the additive twist
$$ \sigma \colon \bigoplus_{k=1}^n \bigoplus_{\ell=1}^n A_{i,k}\otimes
B_{k,\ell} \otimes C_{\ell, j} \longrightarrow \bigoplus_{\ell=1}^n
\bigoplus_{k=1}^n A_{i,k}\otimes B_{k,\ell} \otimes C_{\ell, j}$$
that exchanges the priority of summation of the two sums. Then
\begin{equation}
  \label{eq:alpha}
\alpha = d_\ell \circ \sigma \circ d_r^{-1} =  \sigma \circ
d_r^{-1}.
\end{equation}
Here, the distributivity law is applied to sums of $n$ entries. This
does not cause problems as addition is assumed to be strictly
associative. The fact that $\alpha$ satisfies MacLane's pentagon axiom
\cite[VII.1(5)]{ML} can be seen by brute-force comparison of terms 
using the axioms \cite[definition 3.3]{EM}.

\begin{lem} \label{lem:transposemaps}
The associativity isomorphism for matrix multiplication, $\alpha$,
and the isomorphisms $\mu$ are compatible, \ie, they satisfy
\begin{equation}
(\id \cdot \mu) \circ \mu \circ \zeta(\alpha)^t = \alpha^{-1} \circ
(\mu \cdot \id) \circ \mu.
\end{equation}
\end{lem}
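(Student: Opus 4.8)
The plan is to reduce the identity to the defining axioms of the anti-involution by comparing the two composites entry by entry. Both sides are morphisms $(\zeta(A \cdot (B\cdot C)))^t \to \zeta(C)^t \cdot (\zeta(B)^t \cdot \zeta(A)^t)$ in $M_n(\R)$, so it suffices to check that they agree on each $(i,j)$-component. Writing out the matrix products, the source in position $(i,j)$ is $\zeta\big(\bigoplus_k A_{j,k}\otimes(\bigoplus_\ell B_{k,\ell}\otimes C_{\ell,i})\big)$, and by axiom (1) (strict symmetric monoidality of $\zeta$ with respect to $\oplus$) together with the strict associativity of $\oplus$, both $\zeta$ and the transpose pass through all the direct sums. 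The target in position $(i,j)$ works out to $\bigoplus_\ell\bigoplus_k \zeta(C_{\ell,i})\otimes\zeta(B_{k,\ell})\otimes\zeta(A_{j,k})$, so each side becomes a map between iterated direct sums whose $(k,\ell)$-summand carries $\zeta(A_{j,k}\otimes B_{k,\ell}\otimes C_{\ell,i})$ into $\zeta(C_{\ell,i})\otimes\zeta(B_{k,\ell})\otimes\zeta(A_{j,k})$, assembled from the component maps $\mu^{-,-}$, the distributivity isomorphisms $d_r$ and $d_\ell$, and the summation-twist $\sigma$.

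Next I substitute the explicit description \eqref{eq:alpha}, $\alpha = \sigma \circ d_r^{-1}$ (recall $d_\ell = \id$). On the left-hand side $\zeta(\alpha)^t$ then unwinds, using axiom (1) so that $\zeta$ and the transpose commute with the twist and pass through direct sums, into a composite of a twist with $\zeta$ of a distributivity isomorphism now acting on the source summation indices; on the right-hand side $\alpha^{-1} = d_r \circ \sigma^{-1}$ sits in the target. The goal is to transport all the distributivity and twist maps into matching positions and to reduce what remains, summand by summand, to the associativity of $\mu$. The tools are: the naturality of $\mu$ from Lemma \ref{lem:natural}; axiom (4), which converts $\mu$ precomposed with $\zeta(d_r)$ (resp.\ $\zeta(d_\ell)$) into $d_\ell$ (resp.\ $d_r$) postcomposed with the direct sum of the component $\mu$'s---and since $d_\ell = \id$ one of these distributivity maps simply disappears; and axiom (3), the associativity of $\mu$, which on the summand indexed by $(k,\ell)$ supplies exactly $(\id \otimes \mu_{A_{j,k}, B_{k,\ell}}) \circ \mu_{A_{j,k}\otimes B_{k,\ell},\, C_{\ell,i}} = (\mu_{B_{k,\ell}, C_{\ell,i}} \otimes \id) \circ \mu_{A_{j,k},\, B_{k,\ell}\otimes C_{\ell,i}}$ relating the two bracketings.

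The main obstacle is purely combinatorial bookkeeping: the transpose reverses the order of the two matrix factors and interchanges the roles of the inner summation indices $k$ and $\ell$, while $\zeta$ together with each $\mu$ reverses the order of the tensor factors. One therefore has to check that after all these reorderings the twist produced by transposing $\alpha$ on the source matches, summand by summand, the twist coming from $\alpha^{-1}$ on the target, and that the distributivity maps are lined up so that axiom (4) applies. Once the indexing is organized so that both composites restrict on the $(k,\ell)$-summand to the two legs of the associativity square of axiom (3), conjugated by the appropriate $d$'s and the twist, the equality follows; this is the matrix analogue of the fact that in the discrete ring case, where there are no sums to reindex and $\alpha = \id$, the statement is literally axiom (3). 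I expect the verification to be a finite, if tedious, comparison of terms in the same spirit as the brute-force check of the pentagon for $\alpha$ mentioned above.
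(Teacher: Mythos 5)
Your proposal is correct and follows essentially the same route as the paper: reduce to the $(i,j)$-component, expand $\alpha$ as $\sigma\circ d_r^{-1}$, and combine axiom (3) (associativity of $\mu$) on each $(k,\ell)$-summand with axiom (4) to trade $\mu\circ\zeta(d_r)$ and $\mu\circ\zeta(d_\ell)$ for $d_\ell$ and $d_r$, with naturality of the distributivity maps handling the remaining reindexing. The paper merely packages this bookkeeping as one large commutative diagram whose top triangles and outer boundary commute by the anti-involution axioms and whose bottom triangles commute by naturality, which forces the middle square --- the asserted identity --- to commute.
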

\begin{proof}
To ease notation, we will abbreviate $A\otimes B$ to $AB$.  The
$(i,j)$ matrix component of the equation
$$(\id \cdot \mu) \circ \mu \circ \zeta(\alpha)^t = \alpha^{-1} \circ
(\mu \cdot \id) \circ \mu \colon \zeta(A\cdot (B\cdot C))^t \lra
\zeta(C)^t \cdot (\zeta(B)^t \cdot \zeta(A)^t)$$
that we want to have is part of the diagram
$$\xymatrix{
{}  &{\bigoplus_{k=1}^n \bigoplus_{\ell=1}^n \zeta(A_{j,k}B_{k,\ell}
C_{\ell,i})} \ar[r]^{\sigma} \ar@<1ex>[d]^{\zeta(d_r)}
\ar@/_4ex/[ldd]_{\bigoplus \bigoplus \mu} & {\bigoplus_{\ell=1}^n
\bigoplus_{k=1}^n \zeta(A_{j,k} B_{k,\ell} C_{\ell, i})}
\ar[d]^{\zeta(d_\ell)}
\ar@/^4ex/[rdd]^{\bigoplus \bigoplus \mu} & {} \\
{}  &{\zeta(\bigoplus_{k=1}^n A_{j,k} \left(\bigoplus_{\ell=1}^n
B_{k,\ell} C_{\ell,i}\right))} \ar[r]^{\zeta(\alpha)}
\ar@<1ex>[d]^{\mu}& {\zeta(\bigoplus_{\ell=1}^n
\left(\bigoplus_{k=1}^n A_{j,k} B_{k,\ell} \right)C_{\ell,i})} \ar@<1ex>[d]^{\mu}  & {}\\
{\triangleleft}\ar[r]^(0.2){d_\ell} \ar@/_4ex/[ddr]_{\bigoplus \bigoplus
\mu \otimes \id} & {\bigoplus_{k=1}^n \left(\bigoplus_{\ell=1}^n
\zeta(B_{k,\ell} C_{\ell,i})\right) \zeta(A_{j,k})}
\ar@<1ex>[d]^{\mu \cdot \id}& {\bigoplus_{\ell=1}^n \zeta(C_{\ell,
i}) \left(\bigoplus_{k=1}^n \zeta(A_{j,k} B_{k, \ell})\right)}
\ar@<1ex>[d]^{\id \cdot \mu} & {\triangleright} \ar[l]_(0.2){d_r}
\ar@/^4ex/[ddl]^{\bigoplus \bigoplus \id \otimes \mu}\\
{}  &{\bigoplus_{k=1}^n  \left(\bigoplus_{\ell=1}^n \zeta(C_{\ell,
i}) \zeta(B_{k,\ell})\right) \zeta(A_{j,k})} \ar[r]^{\alpha^{-1}} &
{\bigoplus_{\ell=1}^n \zeta(C_{\ell, i})\left(\bigoplus_{k=1}^n
\zeta(B_{k,\ell})\zeta(A_{j,k})\right)
} & {}\\
{}  &{\bigoplus_{k=1}^n \bigoplus_{\ell=1}^n \zeta(C_{\ell,i})
\zeta(B_{k,\ell}) \zeta(A_{j,k}}) \ar[u]^{d_\ell}\ar[r]^{\sigma^{-1}
= \sigma}& {\bigoplus_{\ell =1}^n \bigoplus_{k=1}^n \zeta(C_{\ell,
i})\zeta(B_{k,\ell}) \zeta(A_{j,k})} \ar[u]^{d_r} & {} }$$

Here, the symbol $\triangleleft$ on the left hand side stands for
$\bigoplus_{k=1}^n \bigoplus_{\ell=1}^n
\zeta(B_{k,\ell} \otimes C_{\ell,i}) \otimes \zeta(A_{j,k})$
and the $\triangleright$ on the right hand side is short for
$\bigoplus_{\ell=1}^n \bigoplus_{k=1}^n \zeta(C_{\ell, i}) \otimes
\zeta(A_{j,k} \otimes B_{k, \ell}).$
From the definition of an anti-involution we know that the top
triangles and the outer diagram commute. Naturality of the
distributivity transformations makes the bottom triangles commute
and therefore the square in the middle commutes as well.
\end{proof}

\begin{lem}
The isomorphisms $\tau(\phi)^{i,j,k}$ as in \eqref{eq:cohisos} are
coherent.
\end{lem}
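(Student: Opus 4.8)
The plan is to verify the one condition that makes $\tau(\phi)$ a well-defined object of $B_qGL_n\R$: the defining coherence square of the bar construction must commute for the maps $\tau(\phi)^{i,j,k}$, i.e.\ for all $0 \leq i<j<k<l \leq q$ one needs
$$\tau(\phi)^{i,k,l} \circ (\tau(\phi)^{i,j,k} \cdot \id) = \tau(\phi)^{i,j,l} \circ (\id \cdot \tau(\phi)^{j,k,l}) \circ \alpha,$$
where $\alpha$ is the associativity isomorphism of $(GL_n\R, \cdot, E_n)$ between the two bracketings of $B^{i,j}\cdot B^{j,k}\cdot B^{k,l}$. First I would pass to the order-reversing relabeling $a=q-l$, $b=q-k$, $c=q-j$, $d=q-i$, so that $a<b<c<d$; writing $\overline{X}:=(\zeta(X))^t$, the six matrices in play are $B^{i,j}=\overline{A^{c,d}}$, $B^{j,k}=\overline{A^{b,c}}$, $B^{k,l}=\overline{A^{a,b}}$, $B^{i,k}=\overline{A^{b,d}}$, $B^{j,l}=\overline{A^{a,c}}$, $B^{i,l}=\overline{A^{a,d}}$, and by \eqref{eq:cohisos} each structure map unwinds as $\tau(\phi)^{\bullet}=\overline{\phi^{\bullet}}\circ\mu^{-1}$. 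Substituting, the identity to be proved becomes a statement purely about the maps $\overline{\phi^{a,b,c}}$, $\overline{\phi^{a,b,d}}$, $\overline{\phi^{a,c,d}}$, $\overline{\phi^{b,c,d}}$, the $\mu$'s, and the associators.

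The core ingredient is the coherence already enjoyed by $A$: since $A\in B_qGL_n\R$, the defining square holds for the indices $a<b<c<d$, giving $\phi^{a,c,d}\circ(\phi^{a,b,c}\cdot\id)=\phi^{a,b,d}\circ(\id\cdot\phi^{b,c,d})\circ\alpha$ as an equation of morphisms in $M_n\R$. I would then observe that $\overline{(-)}=(\zeta(-))^t$ is a covariant functor on morphisms, because composition in $M_n\R$ is entrywise and both $\zeta$ and transposition respect it; hence applying $\overline{(-)}$ to the $A$-square yields a commuting cell relating $\overline{\phi^{a,c,d}}$, $\overline{\phi^{a,b,c}\cdot\id}$, $\overline{\phi^{a,b,d}}$, $\overline{\id\cdot\phi^{b,c,d}}$ and $\overline{\alpha}$. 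The catch is that the source and target of these arrows are transposes of products, whereas the $B$-square is phrased in terms of products of transposes.

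Reconciling the two is where the structure maps $\mu$ do their work, and I would present the whole argument as one large diagram whose outer boundary is the $\tau(\phi)$-square and whose interior is tiled by three kinds of cells. The functorially transported $A$-square sits in the middle and commutes for free. Around it, naturality of $\mu$ (Lemma \ref{lem:natural}) converts each transpose of a one-sided product, $\overline{\phi^{a,b,c}\cdot\id}$ and $\overline{\id\cdot\phi^{b,c,d}}$, into the order-reversed product $\id\cdot\overline{\phi^{a,b,c}}$, respectively $\overline{\phi^{b,c,d}}\cdot\id$, conjugated by the appropriate copies of $\mu^{\pm1}$; this is exactly the shape demanded by \eqref{eq:cohisos}. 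Finally, Lemma \ref{lem:transposemaps} supplies the single genuinely non-formal identity, replacing $\overline{\alpha}=(\zeta(\alpha))^t$ by the associator $\alpha$ of the transposed matrices up to the explicit $\mu$-correction recorded there; here I must be careful that the bar-construction associators point from $(X\cdot Y)\cdot Z$ to $X\cdot(Y\cdot Z)$, i.e.\ they are the inverses of the $\alpha$ of \eqref{eq:alpha}, so Lemma \ref{lem:transposemaps} enters in its inverted form.

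I expect the main obstacle to be purely bookkeeping: tracking in which direction and between which objects each $\mu$ and each $\alpha^{\pm1}$ is inserted, so that after the substitutions coming from Lemmas \ref{lem:natural} and \ref{lem:transposemaps} the interpolating isomorphisms match up and the outer boundary collapses to the required square. No coherence input beyond the $A$-square and these two lemmas should be needed; the computation is a diagram chase of the same flavour as, but one dimension larger than, the proof of Lemma \ref{lem:transposemaps}.
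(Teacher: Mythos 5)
Your proposal is correct and takes essentially the same route as the paper's proof: both reduce the coherence square for $\tau(\phi)$ to the known coherence square for $\phi$ by applying the functor $(\zeta(-))^t$, using naturality of $\mu$ (Lemma \ref{lem:natural}) to slide the transposed structure maps past $\mu^{-1}$ and Lemma \ref{lem:transposemaps} to exchange $\zeta(\alpha)^t$ for the associator of the transposed product. The only difference is presentational (your index relabeling and explicit remark on the direction of the associator), not mathematical.
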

\begin{proof}
Recall that the $\phi^{i,j,k}$ are the coherence isomorphisms for the
triangle of matrices $(A^{i,j})_{i,j}$ and that $B^{ij} =
(\zeta(A^{q-j,q-i}))^t$. We have to prove that the following diagram
commutes.
\begin{equation} \label{eq:pentagon}
\xymatrix@C=0cm{ {B^{ij} \cdot (B^{jk} \cdot B^{k\ell})}
\ar[rr]^{\alpha} \ar[d]_{\id
  \cdot \tau(\phi)^{j,k,\ell}}& & {(B^{ij}
  \cdot B^{jk}) \cdot B^{k\ell}} \ar[d]^{\tau(\phi)^{i,j,k} \cdot \id} \\
{{B^{ij} \cdot B^{j\ell}}} \ar[dr]_{\tau(\phi)^{i,j,\ell}}& &{B^{ik} \cdot B^{k\ell}} \ar[dl]^{\tau(\phi)^{i,k,\ell}} \\
 &
{B^{i\ell}} & }
\end{equation}
As $\tau(\phi)^{j,k,\ell}$ is the composition
$(\zeta(\phi^{q-\ell,q-k,q-j}))^t \circ \mu^{-1}$, as we know from
naturality of $\mu$ that
$$ \mu^{-1} \circ ((\zeta(\phi^{q-k,q-j,q-i}))^t \cdot \id) = \mu^{-1} \circ
((\zeta(\phi^{q-k,q-j,q-i}))^t \cdot \id^t) = (\id \cdot
\zeta(\phi^{q-k,q-j,q-i}))^t \circ \mu^{-1}$$ and as we have Lemma
\ref{lem:transposemaps}, it suffices to show that the diagram
$$\xymatrix@C=-0.3cm{
{\zeta(A^{q-\ell,q-k} \cdot (A^{q-k,q-j} \cdot
  A^{q-j,q-i}))^t} \ar[rr]^{\zeta(\alpha)^t} \ar[d]_{\zeta(\id \cdot
  (\phi^{q-k,q-j,q-i}))^t} & {} & {\zeta((A^{q-\ell,q-k} \cdot
  A^{q-k,q-j})  \cdot   A^{q-j,q-i})^t}
  \ar[d]^{(\zeta(\phi^{q-\ell,q-k,q-j})\cdot \id)^t} \\
{\zeta(A^{q-\ell,q-k} \cdot A^{q-k,q-i})^t}
\ar[dr]_{\zeta(\phi^{q-\ell,q-k,q-i})^t \phantom{blabla}}& {} &
{\zeta(A^{q-\ell,q-j} \cdot
  A^{q-j,q-i})^t}\ar[dl]^{\phantom{blabla}\zeta(\phi^{q-\ell,q-j,q-i})^t} \\
{}& {\zeta(A^{q-\ell,q-i})^t}& {} }$$ commutes. As both
transposition and $\zeta$ are functors, the commutativity of this
diagram is equivalent to the equality
$$ \phi^{q-\ell,q-j,q-i} \circ (\phi^{q-\ell,q-k,q-j} \cdot \id)
\circ \alpha = \phi^{q-\ell,q-k,q-i} \circ  ( \id  \cdot
\phi^{q-k,q-j,q-i})$$ and this holds because the isomorphisms
$(\phi^{q-\ell,q-j,q-i})$ are coherent.
\end{proof}
\begin{rem}
If $G$ is a group, then the inverse map induces a map on the level
of classifying spaces $B\iota\colon BG \rightarrow BG^{op}$ . Here,
$G^{op}$ is the group $G$ with opposite multiplication. This map is
homotopic to the map $\kappa\colon BG \rightarrow BG^{op}$ which
sends $((g_1,\ldots,g_q),(t_0,\ldots,t_q)) \in B_qG$ to
$((g_q,\ldots,g_1),(t_q,\ldots,t_0)) \in BG^{op}$ (see
\cite[p.~206]{BF} for an explicit homotopy). Note that $\kappa$ can
be defined for monoids as well, in particular it applies to the
monoid of weakly invertible matrices over a bimonoidal category
$\R$.
\end{rem}
Let $r \colon \Delta^{op} \ra \Delta^{op}$ \cite[(3.14)]{St} be the 
following functor: on objects $r$ is the identity. If 
$f = g^{op} \colon [p] \ra [q]$ is a morphism in $\Delta^{op}$ then 
$r(f)$ is the opposite of the  monotone map that is given 
by 
$$i \mapsto p-g(q-i), \quad \text{ for all } \quad 0 \leq i \leq
q.$$

If $\delta_i\colon [q] \ra [q+1]$ denotes the map that is the
inclusion that misses $i$ and is strictly monotone everywhere else
and if $\sigma_i\colon [q] \ra [q-1]$ is the surjection that sends
$i$ and $i+1$ to $i$ and is strictly monotone elsewhere, then note
that
$$ r(s_i) = r((\sigma_i)^{op}) = (\sigma_{q-i-1})^{op} = s_{q-i-1}, \quad 
r(d_i) = r((\delta_i)^{op}) = (\delta_{q-i-1})^{op} = d_{q-i-1}.$$ 
Let $\mathrm{CAT}$ denote the category of small
categories.
\begin{lem}
If $\tilde{B}GL_n(\R)$ denotes the bar construction of $GL_n\R$ with
respect to the simplicial structure
$$\xymatrix@1{
{\Delta^{op}} \ar[r]^{r} & {\Delta^{op}} \ar[rr]^{BGL_n\R} & &
{\mathrm{CAT},}}$$ then $\tau$ induces a well-defined map of
simplicial categories
$$\tau\colon BGL_n\R \ra \tilde{B}GL_n\R$$
for all $n$.
\end{lem}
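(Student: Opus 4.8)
Looking at this, I need to prove that $\tau$ is a well-defined map of simplicial categories from $BGL_n\R$ to the bar construction $\tilde{B}GL_n\R$ built using the reversing functor $r$.

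Let me understand what's being claimed. The map $\tau$ sends an object of $B_qGL_n\R$ (a "triangle" of matrices $A^{ij}$ with coherence isos $\phi^{ijk}$) to a new triangle with $B^{ij} = (\zeta(A^{q-j,q-i}))^t$ and the coherence isos $\tau(\phi)^{ijk}$ defined in the excerpt. The previous lemmas have established that $\tau$ on a fixed level $q$ is well-defined: the $\tau(\phi)^{ijk}$ are coherent (that lemma), and naturality holds (Lemma on morphisms). So the content of THIS lemma is the simplicial compatibility — that $\tau$ commutes with the simplicial operators, where the target uses the twisted simplicial structure via $r$.

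So my plan would be:

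First I would confirm that $\tau$ is well-defined \emph{levelwise} as a functor $\tau\colon B_qGL_n\R \to B_qGL_n\R$. This is already done: objects go to objects by the coherence lemma, and morphisms go to morphisms with the required compatibility by Lemma~\ref{lem:natural} together with functoriality of $\zeta$ and transposition. Hence I only need to check that the collection $\{\tau\}_q$ forms a map of \emph{simplicial} categories into $\tilde BGL_n\R$, i.e.\ that for every $\varphi\colon[q]\to[p]$ in $\Delta$ one has $\tau\circ\varphi^* = (r\varphi)^*\circ\tau$, where $\varphi^*$ denotes the structure map of $BGL_n\R$ and $(r\varphi)^*$ that of $\tilde BGL_n\R$.

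Since every morphism in $\Delta^{op}$ is a composite of the elementary faces $d_i$ and degeneracies $s_i$, it suffices to verify the identity on these generators, using the relations $r(d_i)=d_{q-i-1}$ and $r(s_i)=s_{q-i-1}$ recorded just before the statement. I would therefore check that $\tau\circ d_i = d_{q-i-1}\circ\tau$ and $\tau\circ s_i = s_{q-i-1}\circ\tau$ at the level of objects and morphisms. Concretely, the structure maps of $BGL_n\R$ act by precomposition with the monotone map, so $\varphi^*$ selects entries $A^{\varphi(i),\varphi(j)}$ with the inherited coherence isos. I would write out both sides of each generator relation: on the left, apply the face/degeneracy in $\R$-land and then form $(\zeta(-))^t$ with the reindexing $ij\mapsto q-j,q-i$; on the right, first form $\tau$ and then apply the face/degeneracy at index $q-i-1$. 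The key combinatorial point is that the reflection $j\mapsto q-j$ built into $\tau$ is \emph{exactly} what converts the operator $d_i$ (resp.\ $s_i$) into $d_{q-i-1}$ (resp.\ $s_{q-i-1}$), so the index bookkeeping matches on the nose for the objects $B^{ij}$.

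The main obstacle will be the coherence isomorphisms rather than the objects. For objects the reindexing is a direct computation, and degeneracies are handled by the convention that $A^{ii}$ is the unit (so $\zeta$ and transposition preserve $E_n$, as noted after the prolongation of $\zeta$). The real care is needed to see that the structure maps $\tau(\phi)^{ijk}$ — which are built from $\mu^{-1}$ followed by $(\zeta(\phi))^t$ — are carried correctly under the operators; here I would invoke naturality of $\mu$ (Lemma~\ref{lem:natural}) and functoriality of both $\zeta$ and $(-)^t$ to commute these past the reindexing, exactly as in the coherence lemma. For the degeneracies one must also check that inserting a unit on the source side corresponds, after reflection, to inserting a unit at the reflected position, which follows from condition~(2) in Definition~\ref{def:antiinv} (the normalization $\mu_{1_\R,A}=\mathrm{id}=\mu_{A,1_\R}$ together with $\zeta(1_\R)=1_\R$). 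Assembling these generator checks and citing the simplicial identities for $r$ then yields that $\tau$ is a well-defined map of simplicial categories, completing the proof.
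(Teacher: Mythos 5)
Your proposal is correct and follows essentially the same route as the paper: reduce to the generators $d_i$ and $s_i$, handle degeneracies via $\zeta(E_n)=E_n$, and verify the face-map compatibility by the index identity $\delta_i(q-1-s) = q-\delta_{q-i-1}(s)$, which is exactly the "reflection converts $d_i$ into $d_{q-i-1}$" point you describe. The paper's proof is just a terser version of this generator check.
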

\begin{proof}
The argument is straightforward for $s_i = (\sigma_i)^{op}$, using the
fact that  $\zeta$ respects unit matrices. We have to prove that the diagram
$$ \xymatrix{
{B_qGL_n\R} \ar[r]^{\tau} \ar[d]_{d_i}& {\tilde{B}_qGL_n\R} 
\ar[d]^{d_{q-i-1}}
\\
{B_{q-1}GL_n\R} \ar[r]^{\tau} & {\tilde{B}_{q-1}GL_n\R}} $$ commutes. Moving
anti-clockwise sends a triangle of objects $(A^{k\ell})$ first to
the triangle $(C^{k\ell})$ with $C^{k \ell} = A^{\delta_i(k), 
\delta_i(\ell)}$ 
and
then applies $\tau$ to yield as $(k,\ell)$-entry
$$ (\zeta(A^{\delta_i(q-1-\ell), \delta_i(q-1-k)}))^t. $$
Walking clockwise means to apply the involution $\tau$ first to
obtain the triangle $(B^{k\ell})$ with $B^{k\ell} =
(\zeta(A^{q-\ell, q-k}))^t$. Afterwards the application of $d_{q-i-1}$
sends this triangle to $(D^{k\ell})$ with $D^{k\ell} =
\zeta(A^{q-\delta_{q-i-1}\ell, q-\delta_{q-i-1}(k)})^t$. As we have
$$ \delta_i(q-1-s) = q - \delta_{q-i-1}(s)$$
for all $0 \leq s \leq q-1$, the claim follows.

\end{proof}

\begin{thm} \label{thm:invKR}
The involution $\tau$ gives rise to an involution on $\K(\R)$ for
every bimonoidal category with anti-involution $(\R, \zeta, \mu)$.
\end{thm}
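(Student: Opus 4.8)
The plan is to pass from the simplicial map $\tau$ to an honest self-map of $\K(\R)$ in four steps: realise $\tau$, use the reversal functor $r$ to straighten out the twisted target, assemble over all $n$ and group complete, and finally verify that the result squares to the identity.

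First I would realise. The preceding lemma gives a map of simplicial categories $\tau\colon BGL_n\R \ra \tilde B GL_n\R$, where $\tilde B GL_n\R = BGL_n\R\circ r$. The essential point is that precomposition with $r$ is invisible to geometric realisation up to a canonical reversal homeomorphism: since $r$ is the identity on objects and satisfies $r(d_i)=d_{q-i-1}$ and $r(s_i)=s_{q-i-1}$, the realisation $|\tilde B GL_n\R|$ is naturally homeomorphic to $|BGL_n\R|$ by the coordinate reversing map $(t_0,\dots,t_q)\mapsto(t_q,\dots,t_0)$. This is exactly the map $\kappa$ of the Remark above, whose basic properties are recorded in \cite{BF} and \cite{St}. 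Composing $|\tau|$ with this homeomorphism produces, for every $n$, a self-map
$$\tau_n\colon |BGL_n\R| \ra |BGL_n\R|.$$

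Next I would assemble over $n$ and descend to $\K(\R)$. The block-sum pairing $GL_n\R\times GL_m\R\ra GL_{n+m}\R$ giving the monoid structure on $\bigsqcup_{n\geq 0}|BGL_n\R|$ is respected strictly by $\tau$: the transpose of a block-diagonal matrix is block-diagonal with transposed blocks, $\zeta$ acts entrywise and hence preserves this decomposition together with the coherence isomorphisms, and the coordinate reversal is independent of the matrix size; moreover $\zeta(E_n)=E_n$, so $\tau$ fixes the basepoint and is compatible with stabilisation. Consequently the $\tau_n$ form a map of topological monoids and induce a self-map on $\Omega B(\bigsqcup_{n\geq 0}|BGL_n\R|)=\K(\R)$.

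It remains to show that this self-map is an involution, and here I would prove $\tau\circ\tau=\id$ already at the level of simplicial categories, using $r\circ r=\id_{\Delta^{op}}$ so that the double reversal homeomorphism is the identity. On objects, applying $\tau$ twice sends $A^{ij}$ to $(\zeta(\zeta(A^{ij})^t))^t$; since $\zeta$ commutes with transposition, $\zeta\circ\zeta=\id$, transposition is an involution and $q-(q-i)=i$, this is again $A^{ij}$. On the coherence isomorphisms of \eqref{eq:cohisos} one computes $\tau(\tau(\phi))^{ijk}$ and reduces it to $\phi^{ijk}$ using naturality of $\mu$ (Lemma \ref{lem:natural}), the compatibility of the associativity isomorphism with $\mu$ (Lemma \ref{lem:transposemaps}), and coherence of the $\phi^{ijk}$. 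Combined with the levelwise computation, this yields $\tau_n\circ\tau_n=\id$ and hence an involution on $\K(\R)$.

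The step I expect to be the main obstacle is this last one: tracking $\tau(\tau(\phi))^{ijk}$ through two applications of $\mu^{-1}$ and of $\zeta(-)^t$ forces one to pin down the precise interplay of the structure isomorphisms $\mu$ with $\zeta$ and with transposition, namely that $\zeta(\mu)$ inverts $\mu$ after the index reversal built into $\tau$. If this holds on the nose one obtains $\tau^2=\id$ strictly; otherwise it holds up to coherent natural isomorphism and one gets an involution in the homotopy category, which suffices for the statement. By comparison, the monoidal compatibility and the passage to the group completion are routine.
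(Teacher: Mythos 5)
Your proposal matches the paper's proof in all essentials: the paper likewise realises $\tau$, identifies $|\tilde{B}GL_n\R|$ with $|BGL_n\R|$ via the coordinate-reversing homeomorphism induced by $r$, checks compatibility with the block-sum monoid structure on $\bigsqcup_{n\geq 0}|BGL_n\R|$, and passes to $\Omega B$. Your final step --- verifying $\tau\circ\tau=\id$ on the coherence isomorphisms --- is actually more careful than the paper, which omits this check entirely; the condition you isolate there, namely that $\mu_{\zeta(B),\zeta(A)}\circ\zeta(\mu_{A,B})=\id$, is indeed not among the axioms of Definition \ref{def:antiinv}, so your fallback (an involution up to coherent natural isomorphism, hence on the level of $\K(\R)$ as a space) is the honest reading and suffices for the statement.
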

\begin{proof}
We saw that the involution $\tau$ is a morphism of simplicial
categories
$$ \tau\colon BGL_n \R \rightarrow  \tilde{B}GL_n \R,$$
thus it remains to show that the realization of $\tilde{B}GL_n \R$,
$|\tilde{B}GL_n \R|$ is homeomorphic to  $|{B}GL_n \R|$ and that the
involution passes to the group completion.

The first claim is easy to see, because the self-map $r$ on
$\Delta^{op}$ amounts to a map on the realization that reverses the
coordinates in the standard simplices.

As $\K(\R) = \Omega B (\bigsqcup_{n \geq 0} |BGL_n \R|)$, we have to
show that $\tau$ is compatible with the monoid structure on
$\bigsqcup_{n \geq 0} |BGL_n \R|$. Note, that the following diagram
commutes
$$\xymatrix{
{B_qGL_n\R \times B_qGL_m\R} \ar[r]^(0.6){\oplus}
\ar[d]_{(\tau,\tau)} &
{B_qGL_{n+m}\R} \ar[d]^{\tau}\\
{\tilde{B}_qGL_n\R \times \tilde{B}_qGL_m\R} \ar[r]^(0.6){\oplus} & {\tilde{B}_qGL_{n+m}\R}
}$$ and therefore we obtain on the level of classifying spaces that
$$\xymatrix{
{|BGL_n\R| \times |BGL_m\R}| \ar[r]^(0.6){\oplus}
\ar[d]_{(|\tau|,|\tau|)} &
{|BGL_{n+m}\R}| \ar[d]^{\tau}\\
{|\tilde{B}GL_n\R| \times |\tilde{B}GL_m\R|} \ar[r]^(0.6){\oplus} & 
{|\tilde{B}GL_{n+m}\R|}
}$$ commutes.
\end{proof}

\begin{prop} \label{prop:natural}
If $F \colon (\R, \zeta, \mu) \ra (\R',\zeta', \mu')$ is a morphism
of bimonoidal categories with anti-involution, then $F$ commutes
with the involutions on $\K\R$ and $\K\R'$, \ie, $F \circ \tau =
\tau \circ F$,
$$\xymatrix{
{\K\R} \ar[r]^{\tau} \ar[d]_{F} & {\K\R} \ar[d]^{F}\\
{\K\R'} \ar[r]^{\tau} & {\K\R'.} }$$
\end{prop}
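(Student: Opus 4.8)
The plan is to establish the commutation $\tau\circ F = F\circ\tau$ already at the level of the simplicial categories, i.e. as maps $BGL_n\R \to \tilde{B}GL_n\R'$ for all $n$, and then to realize, take the disjoint union over $n$, and group complete exactly as in the proof of Theorem \ref{thm:invKR}. First recall how $F$ acts on the bar construction. As in the case of $\zeta$, we prolong $F$ to matrices coordinatewise, $F(A) = (F(A_{i,j}))_{i,j}$, and the lax multiplicative structure maps $\lambda$ of \eqref{eq:lambdas} assemble, together with the additive structure, into a natural transformation $\Lambda^{A,B}\colon F(A)\cdot F(B) \ra F(A\cdot B)$ of matrices. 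A triangle $(A^{i,j},\phi^{i,j,k})\in B_qGL_n\R$ is then sent by $F$ to the triangle with objects $F(A^{i,j})$ and coherence isomorphisms $F(\phi^{i,j,k})\circ\Lambda^{A^{i,j},A^{j,k}}$, which lands in $GL_n\R'$ because $F$ respects $\pi_0$.

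On objects the commutation is immediate: both $\zeta$, $\zeta'$ and $F$ act coordinatewise while the transpose and the index reversal $A^{i,j}\mapsto A^{q-j,q-i}$ merely permute coordinates, so using $F\circ\zeta = \zeta'\circ F$ one finds that $\tau(F(A))$ and $F(\tau(A))$ have the same objects, namely $\zeta'(F(A^{q-j,q-i}))^t$ in position $(i,j)$. The substance of the argument is the agreement of the two families of coherence isomorphisms. Unwinding \eqref{eq:cohisos}, $F(\tau(\phi)^{i,j,k})\circ\Lambda$ equals $F(\zeta(\phi^{q-k,q-j,q-i})^t)\circ F(\mu^{-1})\circ\Lambda$, while $\tau(F(\phi))^{i,j,k}$ equals $\zeta'(F(\phi^{q-k,q-j,q-i}))^t\circ\zeta'(\Lambda)^t\circ(\mu')^{-1}$. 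The two outer factors coincide by $F\circ\zeta = \zeta'\circ F$ and functoriality of the transpose, so the claim reduces to the identity $F(\mu^{-1})\circ\Lambda = \zeta'(\Lambda)^t\circ(\mu')^{-1}$, that is, to showing that $F$ intertwines the matrix-level twist $\mu$ of $\R$ with the twist $\mu'$ of $\R'$. Passing to the $(i,j)$-entry, where $\mu = \bigoplus_k \mu^{A_{j,k},B_{k,i}}$ and $\Lambda$ is built coordinatewise from the maps $\lambda$, this square becomes a direct sum of copies of precisely the hexagonal compatibility diagram of Definition \ref{def:morbimantiinv}; combined with the naturality of $\lambda$ it gives the desired equality of coherence isomorphisms.

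Once $\tau\circ F = F\circ\tau$ holds as maps $BGL_n\R\ra\tilde{B}GL_n\R'$ for all $n$, I realize and use, as in Theorem \ref{thm:invKR}, the homeomorphism $|\tilde{B}GL_n\R'|\cong|BGL_n\R'|$ given by coordinate reversal. Since $F$ is compatible with the additive monoid structure $\oplus$ on $\bigsqcup_{n\geq 0}|BGL_n|$ and $\tau$ is too, both maps descend through $\Omega B(-)$ and the square on $\K$ commutes. I expect the coherence-isomorphism step of the previous paragraph to be the main obstacle: the bookkeeping of the order in which $\mu^{-1}$, the transpose, $\zeta$ and the lax maps $\lambda$, $\Lambda$ are applied, and the recognition of the resulting coordinatewise square as the hexagon of Definition \ref{def:morbimantiinv}.
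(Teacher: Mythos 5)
The paper states Proposition \ref{prop:natural} without proof, so there is no argument of record to compare yours against; what you have written is a correct and evidently the intended way to supply the missing details. Your reduction is the right one: the objects match by $F\circ\zeta=\zeta'\circ F$ and the coordinatewise nature of $F$, transposition and index reversal, and the only substantive point is the equality of coherence isomorphisms, which after peeling off the common factor $(\zeta'(F(\phi^{q-k,q-j,q-i})))^t=F((\zeta(\phi^{q-k,q-j,q-i}))^t)$ comes down to $F(\mu^{-1})\circ\Lambda=(\zeta'(\Lambda))^t\circ(\mu')^{-1}$; entrywise this is exactly the compatibility hexagon of Definition \ref{def:morbimantiinv}, which is clearly the reason that diagram was built into the definition of a morphism of bimonoidal categories with anti-involution. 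One small point to make explicit when writing this up in full: the matrix-level $\Lambda$ is assembled not only from the multiplicative maps $\lambda^{A,B}$ but also from the additive lax structure maps of $F$, so the passage from the entrywise hexagons to the matrix statement uses, in addition to naturality of $\lambda$, that the additive structure maps commute with $\zeta$ and $\zeta'$ (which follows since $F$ is a lax bimonoidal functor satisfying $F\circ\zeta=\zeta'\circ F$ and $\zeta$ is strictly symmetric monoidal for $\oplus$). The final passage through realization, the homeomorphism $|\tilde{B}GL_n\R'|\cong|BGL_n\R'|$, the $\oplus$-monoid structure and $\Omega B(-)$ is exactly as in the proof of Theorem \ref{thm:invKR} and is unproblematic.
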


Let $R$ be a associative ring with unit. An \emph{anti-involution}
on $R$ (called involution in \cite[definition 1.1]{BF}) is  a
function $ \iota \colon R \ra R$ with $\iota(\iota(a)) = a$,
$\iota(a+b) = \iota(a) + \iota(b)$ and $\iota(ab) = \iota(b)
\iota(a)$ for all $a,b \in R$.

\begin{defn} \label{ex:rings}
If $R$ is a ring or a rig, then the category which has the elements of $R$ as
objects and only identity morphisms is a bimonoidal category. We
denote this category by $\R_R$ and call it the \emph{discrete
category associated to the ring or rig $R$}. If $R$ is commutative, then
$\R_R$ is bipermutative.
\end{defn}
If $R$ is a ring then the spectrum associated to
$\R_R$ is the Eilenberg-Mac\,Lane
spectrum of the ring $R$. For a rig $R$, we obtain the
Eilenberg-MacLane spectrum of the group completion $Gr(R)$.

Note that for a small bimonoidal category with anti-involution $(\R, \zeta,
\mu)$, the set of path components $\pi_0(\R)$, is a rig with
anti-involution.

\begin{cor} \label{cor:pi0}
For a small bimonoidal category with anti-involution $(\R', \zeta, \mu)$,
the map $\K(\R') \rightarrow \K(\R_{\pi_0(\R')})$ commutes with the
involutions on $\K(\R')$ and $\K(\R_{\pi_0(\R')}) \simeq K^f
(Gr(\pi_0(\R')))$.
\end{cor}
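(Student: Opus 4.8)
The plan is to exhibit the map $\K(\R') \to \K(\R_{\pi_0(\R')})$ as the map on $K$-theory induced by a morphism of bimonoidal categories with anti-involution, and then to invoke Proposition \ref{prop:natural} directly. First I would pin down the relevant functor. Let $F \colon \R' \to \R_{\pi_0(\R')}$ send an object $A$ to its path component $[A] \in \pi_0(\R')$ and send every morphism to an identity (which is forced, since $\R_{\pi_0(\R')}$ has only identity morphisms). Applied entrywise, $F$ induces $M_n(\R') \to M_n(\R_{\pi_0(\R')})$, and since weak invertibility of a matrix is detected on its matrix of $\pi_0$-classes, $F$ restricts to $GL_n(\R') \to GL_n(\R_{\pi_0(\R')})$; the target here is simply the discrete monoid $GL_n(\pi_0(\R'))$. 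Passing to bar constructions, realisations, and group completion yields exactly the map $\K(\R') \to \K(\R_{\pi_0(\R')})$ under consideration.

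Next I would check that $F$ is a morphism of bimonoidal categories with anti-involution in the sense of Definition \ref{def:morbimantiinv}. The functor $F$ is lax (in fact strict) bimonoidal because $[A \oplus B] = [A] \oplus [B]$ and $[A \otimes B] = [A] \otimes [B]$ in the rig $\pi_0(\R')$, so the structure transformations $\lambda^{A,B}$ of \eqref{eq:lambdas} are identities. The anti-involution $\zeta$ on $\R'$ induces the rig anti-involution $\zeta'$ on $\pi_0(\R')$ — this is precisely the remark preceding the corollary that $\pi_0(\R')$ is a rig with anti-involution — and hence $F \circ \zeta = \zeta' \circ F$. Finally, the compatibility square of Definition \ref{def:morbimantiinv} relating $\mu$, $\mu'$ and the $\lambda$'s commutes because the target $\R_{\pi_0(\R')}$ is discrete: there $\lambda'$, $\mu'$ and $\zeta'$ all act as identities, the relation $\zeta'([A][B]) = \zeta'([B])\,\zeta'([A])$ being an \emph{equality} of elements of the rig rather than merely an isomorphism, so both composites in the square reduce to the identity of $\zeta'(F(B \otimes A))$.

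With this established, Proposition \ref{prop:natural} immediately gives $F \circ \tau = \tau \circ F$ on $K$-theory, which is the asserted commutativity of the involutions. It then remains only to record the identification $\K(\R_{\pi_0(\R')}) \simeq K^f(Gr(\pi_0(\R')))$; this is the special case, applied to the discrete category of Definition \ref{ex:rings}, of the weak equivalence noted just after the definition of $\K(\R)$, together with the observation that $GL_n$ of a discrete category $\R_R$ is the discrete monoid $GL_n(R)$, so that the whole construction computes the free $K$-theory of $Gr(\pi_0(\R'))$.

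I expect the only step requiring genuine care to be the verification that $F$ respects the anti-involution structure, and within that the observation that the compatibility diagram of Definition \ref{def:morbimantiinv} collapses to a diagram of identities precisely because the target is discrete; once that is seen, everything else is formal and the result is an immediate application of Proposition \ref{prop:natural}.
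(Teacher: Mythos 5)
Your proposal is correct and is exactly the argument the paper intends: the corollary is stated without proof as a direct consequence of Proposition \ref{prop:natural}, applied to the projection functor $\R' \to \R_{\pi_0(\R')}$, together with the preceding remark that $\pi_0(\R')$ inherits a rig anti-involution. Your careful verification that this projection is a morphism of bimonoidal categories with anti-involution (with the compatibility square collapsing because the target is discrete) fills in precisely the details the paper leaves implicit.
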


\begin{prop}
For a ring with anti-involution the involution constructed on
$\K(\R_R)$ agrees with the standard involution on $K_i(R), i \geq
1$.
\end{prop}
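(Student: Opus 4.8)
The plan is to unwind both involutions into explicit (anti-)homomorphisms of the groups $GL_n(R)$ and then to match them up to homotopy by means of the Remark on $\kappa$ and $B\iota$. Since $\R_R$ is discrete, every coherence isomorphism $\phi^{ijk}$ is forced to be an identity, so $GL_n(\R_R)$ is simply the group $GL_n(R)$ viewed as a discrete monoidal category, $|BGL_n(\R_R)| = BGL_n(R)$ is its usual classifying space, and $\K(\R_R) \simeq K_0^f(R) \times BGL(R)^+$. Here $\pi_0(\R_R) = R = Gr(R)$, so each $GL_n(R)$ is genuinely a group (not merely a monoid), which is what will let us invoke the Remark. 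The factors $K_i(R)$ for $i \geq 1$ come from $BGL(R)^+$. Under this identification a $q$-simplex of the nerve is a tuple $(g_1,\dots,g_q)$ of matrices with $A^{i-1,i} = g_i$ and $A^{ij} = g_{i+1}\cdots g_j$, while $\zeta$ is the entrywise anti-involution $\iota$ and all the maps $\mu$ are identities.

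First I would compute $\tau$ on the nerve. Writing $J(M) = (\zeta(M))^t = \iota(M)^t$ for the transpose-involution of a matrix, the defining formula $A^{ij}\mapsto (\zeta(A^{q-j,q-i}))^t$ shows that $\tau$ sends $(g_1,\dots,g_q)$ to $(J(g_q), J(g_{q-1}), \dots, J(g_1))$. Because $\iota$ is an anti-involution of $R$, a short check (the matrix analogue of $\iota(ab)=\iota(b)\iota(a)$) shows that $J$ is an anti-automorphism of $GL_n(R)$ with $J^2 = \id$. Thus $J$ is a homomorphism $GL_n(R)\ra GL_n(R)^{op}$, and $\tau$ factors as $\tau = \kappa\circ BJ$, where $BJ\colon BGL_n(R)\ra BGL_n(R)^{op}$ is induced degreewise by $J$ and $\kappa$ is the order-reversing map of the Remark. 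Conceptually, this is exactly the classical recipe: $BJ$ realises the duality functor landing in the opposite category, and $\kappa$ realises the canonical identification of the $K$-theory of the opposite category with that of $R$.

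Next I would recall the classical description explicitly: the involution on $K_i(R)$ is induced by the duality $P\mapsto \mathrm{Hom}_R(P,R)$ on finitely generated projectives, which on $GL_n(R)$ is given, up to the canonical identification $K(\mathcal{C}^{op})\simeq K(\mathcal{C})$, by the anti-automorphism $J$; equivalently, it is induced by the group homomorphism $\bar J\colon GL_n(R)\ra GL_n(R)$, $\bar J(M) = J(M)^{-1} = \iota(M^{-1})^t$. Applying the Remark to the group $GL_n(R)^{op}$ (whose opposite is $GL_n(R)$) shows that $\kappa\colon BGL_n(R)^{op}\ra BGL_n(R)$ is homotopic to the map $B(\mathrm{inv})$ induced by the inverse homomorphism, whence
$$\tau = \kappa\circ BJ \;\simeq\; B(\mathrm{inv})\circ BJ \;=\; B(\mathrm{inv}\circ J) \;=\; B\bar J.$$
So $\tau$ agrees with the classical involution up to homotopy on each $BGL_n(R)$. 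Since these homotopies are natural and compatible with the block sum making $\bigsqcup_n BGL_n(R)$ a monoid, they pass to the group completion and to the plus construction, giving the claimed equality on $K_i(R)$ for $i\geq 1$.

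The main obstacle I expect is bookkeeping of conventions rather than any deep point: one must confirm that $J$ is genuinely an anti-homomorphism (so that $BJ$ really lands in the opposite group and the Remark applies), pin down the classical involution as the transpose-inverse-involution $\bar J$ rather than one of its variants, and verify that the homotopy $\kappa\simeq B(\mathrm{inv})$ is natural enough in $n$ to survive group completion. The restriction to $i\geq 1$ is precisely the passage to the $BGL(R)^+$ factor, the $K_0^f$ summand being outside the scope of the statement.
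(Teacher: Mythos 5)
Your proof is correct and follows essentially the same route as the paper: both reduce to the ordinary bar construction of the discrete group $GL_n(R)$ (using that all $\phi^{ijk}$ are identities) and compute that $\tau$ sends $(g_1,\dots,g_q)$ to $(\iota(g_q)^t,\dots,\iota(g_1)^t)$ with reversed simplicial coordinates. The only difference is that the paper stops at that formula, citing Burghelea--Fiedorowicz's definition 1.12 as the standard involution, whereas you add the further (correct, but not strictly necessary) identification with $B\bar{J}$ via the homotopy $\kappa \simeq B(\mathrm{inv})$.
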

\begin{proof}
As $GL_m\R_R$ is a strict monoidal category, the bar construction
from Section \ref{sec:barconstr} is equivalent to the ordinary bar
construction \cite[corollary 8.5]{BDRR} and the isomorphism from the
ordinary bar construction to the one in the monoidal setting is
given by sending a $q$-simplex of the ordinary bar construction
$(B_0, \ldots, B_q)$ to the triangle in $B_qGL_n\R_R$
$$
\begin{array}{cccc}
& A^{0,1} & \ldots & A^{0,q} \\
  & & \ddots & \vdots \\
  & & & A^{q-1,q}
\end{array}$$
with entries $A^{i, i+1} = B_i$ on the diagonal. The other entries
are given by iterated matrix multiplication of the $B_i$s and the
isomorphisms $\phi^{ijk}$ are chosen to be identity maps. On the
diagonal the involution $\tau$ sends $(B_0,\ldots, B_q)$ to
$(\zeta(B_q)^t,\ldots, \zeta(B_0)^t)$ and this is precisely what the
standard involution in algebraic $K$-theory does (compare for
instance \cite[definition 1.12]{BF}).
\end{proof}

Note, that if one is willing to work away from the prime $2$, then
involutions give rise to splittings
$$ \K(\R) \sim \K(\R)^a \times \K(\R)^s$$
of $\K(\R)$ into an antisymmetric part, $\K(\R)^a$, and a symmetric
part, $\K(\R)^s$. Corollary \ref{cor:pi0} tells us that such
splittings are compatible with the path component map.

\begin{rem}
There is no straightforward way to mimic Burghelea's and Fiedorowicz's
construction of hermitian $K$-theory in the setting of bimonoidal
categories with anti-involution. There are two main obstacles: matrix
multiplication is not associative any longer and we do not
demand that the structure isomorsphism $\mu$ is the identity.
This has the effect that the analogue of their
category ${}_\varepsilon O_n$ \cite[1.2]{BF} in the bimonoidal world does
not give a strict category.

Similarly, their bar construction description of  ${}_\varepsilon O_n$
does not have a direct analogue. In order to form the one-sided bar
construction $B_1(\mathrm{Sym}_n^1(\R), GL_n(\R), *)$ in the spirit of
\cite[1.3]{BF} one has to have an action of the monoidal category $GL_n(\R)$ on
the category of symmetric matrices $\mathrm{Sym}_n^1(\R)$. Here,
the objects of $\mathrm{Sym}_n^1(\R)$ are matrices $A \in GL_n(\R)$
with $\zeta(A)^t = A$ and morphisms are morphisms in $GL_n(\R)$ that
are untouched by $\zeta$. But for $M \in \mathrm{Sym}_n^1(\R)$ and $A
\in GL_n(\R)$ the object $(\zeta(A)^t\cdot M)\cdot A$ will only be
symmetric up to isomorphism in general.

The involution on hermitian $K$-theory  \cite[4.1]{BF} is induced by the
map that sends a symmetric matrix $A$ to its negative. We know from
\cite{BDRR} that $\K(\R)$ is equivalent to $\K(\bar{\R})$ for some
multiplicative group completion $\bar{\R}$ of $\R$ and  matrices over
$\bar{\R}$ have additive inverses on the level of path components.
\end{rem}

It is straightforward to cook up other examples of bimonoidal
categories with anti-involution along the following lines.

For a discrete group $G$ let $\vee_G\mathcal{E}$ be the category
with objects $\mathbf{n}_g$ with $n \in \mathbb{N}_0$ and $g \in G$.
We identify all objects $\mathbf{0}_g$ to $\mathbf{0}$ which stands
for the empty set and the $\mathbf{n}_g$ should be thought of as the
set $\{1,\ldots,n\}$ labelled by $g \in G$. Morphisms are given by
$$ \vee_G\mathcal{E}(\mathbf{n}_g,\mathbf{m}_h) = \left\{
\begin{array}{cl}
\varnothing & n \neq m\\
\Sigma_n & n = m, g=h \text{ or } n=m=0.
\end{array}
\right.$$ The classifying space of $\vee_G\mathcal{E}$ is
$$B(\vee_G \mathcal{E}) = \bigvee_G
B(\mathcal{E}) = \bigvee_G \left(\bigsqcup_{n \geq 0}
B\Sigma_n\right).$$

We define a bimonoidal structure on $\vee_G\mathcal{E}$ as follows.
Objects can only be added if their indices agree:
$$ \mathbf{n}_g \oplus \mathbf{m}_h = \left\{
\begin{array}{cl}
(\mathbf{n+m})_g & g = h\\
0 & g \neq h
\end{array} \right. $$
and we define the multiplication to be $ \mathbf{n}_g \otimes
\mathbf{m}_h = (\mathbf{n}\otimes \mathbf{m})_{gh}$. The additive
twist, $c_\oplus$ on $\mathcal{E}_G$, is inherited from
$\mathcal{E}$, $\mathbf{0}$ is the zero object and $\mathbf{1}_e$ is
the multiplicative unit, if $e$ denotes the neutral element of the
group $G$.

With this structure $\vee_G\mathcal{E}$ is a bimonoidal category; if
$G$ is abelian, then $\vee_G\mathcal{E}$ is actually bipermutative.

We can define an anti-involution on $\vee_G\mathcal{E}$ for any
discrete $G$ via
$$ \zeta(\mathbf{n}_g) =
\mathbf{n}_{g^{-1}}. $$ Note, that the isomorphisms $\mu$ are not
trivial in this case, but

\begin{align*} \zeta(\mathbf{n}_g \otimes
\mathbf{m}_h) = & \zeta((\mathbf{n}\otimes \mathbf{m})_{gh}) =
(\mathbf{n}\otimes \mathbf{m})_{(gh)^{-1}} = (\mathbf{n} \otimes
\mathbf{m})_{h^{-1}g^{-1}} \\
\neq & \zeta(\mathbf{m}_{h}) \otimes \zeta(\mathbf{n}_{g}) =
(\mathbf{m} \otimes \mathbf{n})_{h^{-1}g^{-1}}
\end{align*}
so we define $\mu$ to be $c_\otimes$ where $c_\otimes$ is the
multiplicative twist in the bipermutative structure of
$\mathcal{E}$. We have that $\zeta(1_e) = 1_e$ and
condition \eqref{eq:braideddistr} follows from the equation
$$ d_r \circ (c_\otimes \oplus c_\otimes) = c_\otimes \circ
d_\ell$$ in bipermutative categories and the associativity of $\mu$
is a consequence of  Lemma \ref{lem:ybe}.

The path components of $\vee_G\mathcal{E}$ constitute the monoid
ring $\mathbb{N}_0[G]$ and therefore we obtain with Corollary
\ref{cor:pi0} that the induced map on $K$-theory
$$ \K(\vee_G\mathcal{E}) \rightarrow \K(\mathbb{N}_0[G])$$
is compatible with the involutions on both sides. Note that
$\K(\mathbb{N}_0[G]) \sim \K(\mathbb{Z}[G])$.

\section{Braided bimonoidal and bipermutative categories}
We will show that braided bimonoidal, and therefore in particular bipermutative
categories, provide examples of bimonoidal categories with
anti-involution.
\begin{defn} \label{def:brbim}
A \emph{braided bimonoidal category} $(\R, \oplus, 0_\R, c_\oplus,
\otimes, 1_\R, \beta)$ consists of a permutative category $(\R, \oplus,
0_\R, c_\oplus)$ and a strict braided monoidal category $(\R, \otimes,
1_\R, \beta)$ (see \cite[XI.1]{ML}) where $\beta$ is the braiding
$$ \beta = \beta^{A,B}\colon A \otimes B \lra B \otimes A.$$
These two structures interact via distributivity laws. We assume
that the left distributivity isomorphism
$$ d_\ell \colon A \otimes B \oplus A' \otimes B \lra (A \oplus A')
\otimes B$$ is the identity and that the right distributivity
isomorphism is given in terms of $d_\ell$ and $\beta$, such that the
following diagram commutes

\begin{equation} \label{eq:braideddistr}
\xymatrix{ {A \otimes B \oplus A
\otimes C} \ar[d]_{\beta \oplus \beta}\ar[r]^{d_r}&{A \otimes
(B \oplus C)} \ar[d]^{\beta}\\
{B \otimes A \oplus C \otimes A} \ar[d]_{\beta \oplus \beta}\ar[r]^{d_\ell} &
{(B \oplus C) \otimes A} \ar[d]^{\beta}\\
{A \otimes B \oplus A \otimes C} \ar[r]^{d_r}&{A \otimes (B \oplus
C).} }
\end{equation}

In addition we want that $\R$ satisfies the remaining axioms of a
bipermutative category in the sense of \cite[definition 3.6]{EM}.
\end{defn}
Note, that condition \eqref{eq:braideddistr} implies that $\beta
\circ \beta \circ d_\ell = d_\ell \circ (\beta \oplus \beta) \circ
(\beta \oplus \beta)$ is satisfied.

Gerald Dunn studied braided bimonoidal categories and the reader might
want to compare the above definition with \cite[definition 3.1]{Du1}. As a
class of examples of braided bimonoidal categories Dunn considered the
category of what he called free crossed $G$-sets for a discrete group $G$
\cite[example 2.3]{Du2}.

For every permutative category $(\mathcal{C}, \oplus, 0_\mathcal{C},
c_\oplus)$ one
can construct the free braided bimonoidal category $Br(\mathcal{C})$
along the lines of the construction in \cite[theorem 10.1]{EM}.
Consider the translation category $EBr_n$ of the $n$-th braid group
$Br_n$. Then
$$ Br(\mathcal{C}) := \bigsqcup_{i\geq 0} EBr_n \times_{Br_n}
\mathcal{C}^n$$ is a braided bimonoidal category (see
\cite[proposition 3.5]{Du1}). We present a different class of examples
in Section \ref{sec:ex}, \ref{subsec:hopfbim}.

In order to check that braided bimonoidal categories actually are
bimonoidal categories with anti-involution and that they fit in the
setting of our definition of $\K(\R)$ in Section \ref{sec:barconstr}
we will need two technical results.

\begin{lem}
Property \eqref{eq:braideddistr} implies that the following diagram
commutes
\begin{equation} \label{eq:e}
\xymatrix{ {A \otimes B \otimes C \oplus A \otimes B' \otimes C}
\ar[d]_{d_r}
\ar[r]^{d_\ell}& {(A \otimes B \oplus A \otimes B') \otimes C}\ar[d]^{d_r \otimes \id} \\
{A \otimes (B \otimes C \oplus B' \otimes C)} \ar[r]^{\id \otimes
d_\ell} & {A \otimes (B \oplus B') \otimes C}}
\end{equation}
\end{lem}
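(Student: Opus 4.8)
The plan is to collapse the square \eqref{eq:e} to a single equation between two instances of the right-distributivity map and then to verify that equation from \eqref{eq:braideddistr} together with the hexagon identity for $\beta$. First I would read off the four edges. Writing $d_r^{A;X,Y}\colon A\otimes X\oplus A\otimes Y\to A\otimes(X\oplus Y)$ to record the objects involved, the top edge is the left-distributivity map $d_\ell\colon (A\otimes B)\otimes C\oplus(A\otimes B')\otimes C\to(A\otimes B\oplus A\otimes B')\otimes C$ and the bottom edge is $\id_A\otimes d_\ell$; since $d_\ell=\id$ both of these are identities, and moreover the two lower objects $A\otimes(B\otimes C\oplus B'\otimes C)$ and $A\otimes(B\oplus B')\otimes C$ literally coincide because $B\otimes C\oplus B'\otimes C=(B\oplus B')\otimes C$. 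Hence the square commutes if and only if
\[
d_r^{A;\,B\otimes C,\,B'\otimes C}=d_r^{A;\,B,B'}\otimes\id_C .
\]

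Next I would extract a usable formula for $d_r$ from \eqref{eq:braideddistr}: its top square, read with $d_\ell=\id$, says $\beta^{A,X\oplus Y}\circ d_r=\beta^{A,X}\oplus\beta^{A,Y}$, so
\[
d_r^{A;X,Y}=(\beta^{A,X\oplus Y})^{-1}\circ(\beta^{A,X}\oplus\beta^{A,Y}).
\]
I would also record two strictness consequences of $d_\ell=\id$, both obtained from the naturality squares of $d_\ell$: the functor $-\otimes C$ is strictly additive, so $(f\oplus g)\otimes\id_C=(f\otimes\id_C)\oplus(g\otimes\id_C)$, and tensoring a single morphism on the left is strictly additive in the same way, $\id_{B\oplus B'}\otimes g=(\id_B\otimes g)\oplus(\id_{B'}\otimes g)$.

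Now I would expand both sides to a common normal form. Using the formula for $d_r$ and strict additivity of $-\otimes C$, the right-hand side becomes $((\beta^{A,B\oplus B'})^{-1}\otimes\id_C)\circ\big[(\beta^{A,B}\otimes\id_C)\oplus(\beta^{A,B'}\otimes\id_C)\big]$. For the left-hand side I would use the object identity $B\otimes C\oplus B'\otimes C=(B\oplus B')\otimes C$ and the hexagon axiom $\beta^{A,X\otimes C}=(\id_X\otimes\beta^{A,C})\circ(\beta^{A,X}\otimes\id_C)$, which is available because $(\R,\otimes,1_\R,\beta)$ is braided monoidal. Applying the hexagon with $X=B\oplus B'$ inside $(\beta^{A,(B\oplus B')\otimes C})^{-1}$ splits off exactly the factor $(\beta^{A,B\oplus B'})^{-1}\otimes\id_C$ and, after invoking the second strict-additivity identity, leaves summand-wise composites $(\id_B\otimes(\beta^{A,C})^{-1})\circ\beta^{A,B\otimes C}$ and the same with $B'$; the hexagons for $X=B$ and $X=B'$ then collapse each of these to $\beta^{A,B}\otimes\id_C$, respectively $\beta^{A,B'}\otimes\id_C$. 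Comparing with the right-hand side concludes the proof.

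The only genuine difficulty is bookkeeping: keeping straight which tensor slot each braiding and each copy of $(\beta^{A,C})^{-1}$ acts on, and taking care to apply the correct (left- versus right-hand) strict-additivity identity in each slot. Once the objects are aligned via $d_\ell=\id$, every individual step is just naturality of $\beta$, functoriality of $\oplus$, or a single application of the hexagon, so no further coherence input beyond \eqref{eq:braideddistr} and the braided structure is needed.
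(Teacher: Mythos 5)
Your argument is correct and is in substance the paper's own proof: both reduce the square \eqref{eq:e} to the single identity $d_r^{A;\,B\otimes C,\,B'\otimes C}=d_r^{A;\,B,B'}\otimes\id_C$ (exploiting $d_\ell=\id$ and the resulting strict additivity of the tensor slots), express $d_r$ through $\beta$ via the top square of \eqref{eq:braideddistr}, and conclude with the hexagon axiom applied to $B$, $B'$ and $B\oplus B'$ plus naturality. The paper merely packages these same steps as one larger commutative diagram surrounding \eqref{eq:e} instead of as an equational computation, so no further comment is needed.
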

\begin{proof}
We embed diagram \eqref{eq:e} into the following diagram. In order
to save space we use $AB$ for $A \otimes B$ and $A+B$ for $A \oplus
B$.
$$
\xymatrix@C=0cm@R=0.6cm{ {}&{ABC+AB'C}\ar[rr]^{d_\ell}
\ar[ldd]_{\beta \oplus \beta} \ar@{}[ldd]^(0.6){(I)} \ar[d]^{d_r}
&{}&{(AB+AB')C}\ar[d]_{d_r \otimes \id} \ar[rdd]^{(\beta \oplus \beta)\otimes \id} \ar@{}[rdd]_(0.6){(IV)}& {}\\
{}&{A(BC + B'C)} \ar[ddl]^{\beta}\ar@{}[ddr]|{(II)} \ar[rr]^{\id
\otimes d_\ell}&{}&{A(B+B')C}
\ar[ddl]^{\beta} \ar[ddr]_{\beta \otimes \id}\ar@{}[dd]|{(III)}& {}\\
{BCA+B'CA}\ar[d]_{d_\ell}&{}&{}&{}& {(BA+B'A)C} \ar[d]^{d_\ell \otimes \id}\\
{(BC+B'C)A} \ar[rr]^{d_\ell \otimes \id}&{}&{(B+B')CA}
 &{}& {(B+B')AC}\ar[ll]_{\id \otimes \beta} }$$

The leftmost subdiagram $(I)$ corresponds precisely to property
\eqref{eq:braideddistr}. Diagram $(II)$ commutes because $\beta$ is
natural and diagram $(III)$ displays one of the axioms for a braided
monoidal category and subdiagram $(IV)$ again corresponds to
property \eqref{eq:braideddistr}. As the left distributivity maps are
identities, the outer diagram again corresponds to the property used
in $(III)$. Therefore the embedded subdiagram \eqref{eq:e} commutes as well.
\end{proof}

This result ensures that the set of axioms used in \cite{BDRR} is
fulfilled in the setting of braided bimonoidal categories. The next
result is the key ingredient that allows us to interpret braided
bimonoidal categories as bimonoidal categories with anti-involution.

\begin{lem} \label{lem:ybe}
Let $\R$ be a braided bimonoidal category. Then the braiding $\beta$
satisfies
$$ (\id \otimes \beta^{A,B}) \circ \beta^{A\otimes B, C} = (\beta^{B,C}
\otimes \id) \circ \beta^{A, B\otimes C}.$$
\end{lem}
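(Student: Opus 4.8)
The plan is to recognise the asserted identity as the Yang--Baxter equation for the braiding and to deduce it from the two hexagon axioms built into the braided monoidal structure $(\R, \otimes, 1_\R, \beta)$ (\cite[XI.1]{ML}), together with the naturality of $\beta$. Since $(\R, \otimes)$ is strictly associative, the hexagon axioms take the clean form
$$\beta^{A, B\otimes C} = (\id \otimes \beta^{A,C}) \circ (\beta^{A,B} \otimes \id), \qquad \beta^{A\otimes B, C} = (\beta^{A,C} \otimes \id) \circ (\id \otimes \beta^{B,C}),$$
with no associativity isomorphisms to track. Both sides of the claimed equation are morphisms $A \otimes B \otimes C \ra C \otimes B \otimes A$, so it suffices to match them as composites of three elementary braidings.

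First I would reshape the left-hand side. Viewing $\beta^{-,C}$ as a natural transformation $(-)\otimes C \Rightarrow C \otimes (-)$ and applying naturality at the morphism $\beta^{A,B}\colon A \otimes B \ra B \otimes A$ gives
$$(\id \otimes \beta^{A,B}) \circ \beta^{A\otimes B, C} = \beta^{B\otimes A, C} \circ (\beta^{A,B} \otimes \id).$$
Expanding the term $\beta^{B\otimes A, C}$ by the second hexagon axiom (in the form $\beta^{B\otimes A,C} = (\beta^{B,C}\otimes\id)\circ(\id\otimes\beta^{A,C})$) then rewrites the left-hand side as the threefold composite
$$(\beta^{B,C} \otimes \id) \circ (\id \otimes \beta^{A,C}) \circ (\beta^{A,B} \otimes \id).$$

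Next I would treat the right-hand side by expanding $\beta^{A, B\otimes C}$ with the first hexagon axiom, which turns $(\beta^{B,C}\otimes\id)\circ\beta^{A,B\otimes C}$ into exactly
$$(\beta^{B,C} \otimes \id) \circ (\id \otimes \beta^{A,C}) \circ (\beta^{A,B} \otimes \id),$$
the same composite obtained for the left-hand side, so the two agree and the lemma follows. There is no deep obstacle here beyond bookkeeping of the tensor factors; the one point requiring care is the \emph{direction} of the naturality step, namely applying naturality of $\beta^{-,C}$ at the morphism $\beta^{A,B}$ to reshape the left-hand side \emph{before} invoking a hexagon. Merely expanding both sides by hexagons would reduce the statement to the bare Yang--Baxter identity, whose verification rests on this very naturality; folding naturality in at the start is what makes the argument direct. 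Strictness of the multiplicative associativity is exactly what keeps every composite free of associator terms.
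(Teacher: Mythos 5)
Your proof is correct and follows essentially the same route as the paper's: both sides are reduced, via the strict hexagon axioms, to the same threefold composite of elementary braidings. The only (cosmetic) difference is that the paper assembles this into one diagram whose outer boundary it identifies as the Yang--Baxter equation for $\beta$, whereas you derive that identification explicitly from the naturality of $\beta^{-,C}$ at the morphism $\beta^{A,B}$, which makes your version marginally more self-contained.
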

\begin{proof}
Consider the following diagram.
$$\xymatrix{
{}&{A \otimes C \otimes B} \ar[dr]^{\beta^{A,C} \otimes \id} & \\
{A \otimes B \otimes C}\ar[ur]^{\id \otimes \beta^{B,C}}
\ar[ddr]^{\beta^{A, B\otimes C}} \ar[rr]^{\beta^{A\otimes B, C}}
\ar[d]_{\beta^{A,B} \otimes \id}
&{}& {C \otimes A \otimes B} \ar[d]^{\id \otimes \beta^{A,B}} \\
{B \otimes A \otimes C}\ar[dr]_{\id \otimes \beta^{A, C}}&{}& {C
\otimes B \otimes A}\\
 {}&{B \otimes C \otimes A} \ar[ur]_{\beta^{B,C} \otimes \id} &{}
}$$

The two triangles display a coherence relation for braided monoidal
categories and thus they commutes. The outer diagram is the
Yang-Baxter equation for the braiding and thus the whole diagram is
commutative.
\end{proof}
\begin{prop}
Every braided bimonoidal category is a bimonoidal category with
anti-involution if one defines $\zeta$ to be the identity and $\mu =
\beta$. In particular, every bipermutative category is a bimonoidal
category with anti-involution with $\zeta = \id$ and $\mu =
c_\otimes$.
\end{prop}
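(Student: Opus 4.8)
The plan is to verify the four conditions of Definition \ref{def:antiinv} for the choice $\zeta = \id$ and $\mu = \beta$. With $\zeta = \id$ we have $\zeta \circ \zeta = \id$ and $\zeta(1_\R) = 1_\R$ at once, and the required natural isomorphisms $\mu_{A,B} \colon \zeta(A \otimes B) \ra \zeta(B) \otimes \zeta(A)$ are precisely the braiding isomorphisms $\beta^{A,B} \colon A \otimes B \ra B \otimes A$, which are natural because $\beta$ is the braiding of a braided monoidal category.

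Condition (1) is immediate: since $(\R, \oplus, 0_\R, c_\oplus)$ is permutative, the identity functor is strictly symmetric monoidal with respect to $\oplus$. For condition (2) I would invoke the standard coherence fact that in a strict braided monoidal category the braiding with the unit is trivial, $\beta^{1_\R, A} = \id_A = \beta^{A, 1_\R}$; this yields $\mu_{1_\R, A} = \id_{\zeta(A)} = \mu_{A, 1_\R}$.

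The one step that carries real content is condition (3). Substituting $\zeta = \id$ and $\mu = \beta$, the associativity diagram for $\mu$ collapses to the equation
$$(\id \otimes \beta^{A,B}) \circ \beta^{A \otimes B, C} = (\beta^{B,C} \otimes \id) \circ \beta^{A, B \otimes C},$$
which is exactly the statement of Lemma \ref{lem:ybe}. I would therefore just cite that lemma; this is where the Yang--Baxter relation for the braiding does the work.

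Finally, for condition (4), I would observe that with $\zeta = \id$, $\mu = \beta$, and $d_\ell = \id$ the first of the two distributivity diagrams becomes the top square of diagram \eqref{eq:braideddistr}, and the second becomes its bottom square after relabelling the variables so that the two summands play the roles of $B, C$. Both squares commute by hypothesis, so condition (4) holds. The bipermutative case is then a special case: a bipermutative category is a braided bimonoidal category whose braiding $\beta = c_\otimes$ is symmetric, so the general statement applies verbatim with $\mu = c_\otimes$. The only place where I expect to have to be careful is matching the variable orderings in condition (4) against the two squares of \eqref{eq:braideddistr}; everything else is a direct translation.
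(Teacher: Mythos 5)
Your proposal is correct and follows essentially the same route as the paper: the paper's proof simply cites Lemma \ref{lem:ybe} for the associativity of $\mu$ and declares the remaining axioms trivial, which is exactly the structure of your argument. Your additional checks (the unit coherence $\beta^{1_\R,A}=\id$ and the identification of condition (4) with the two squares of diagram \eqref{eq:braideddistr} after relabelling) are correct fillings-in of the details the paper leaves implicit.
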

\begin{proof}
The claim follows directly from Lemma \ref{lem:ybe}, because all
other parts of the structure of a bimonoidal category with
anti-involution are trivial.
\end{proof}
Note, that a morphism of bimonoidal categories with anti-involution
as in Definition \ref{def:morbimantiinv} specializes to the
requirement of being a lax symmetric bimonoidal functor in the case
of bipermutative categories.
\section{Group actions} \label{sec:groups}

Let $G$ be a discrete group.
\begin{defn}
\begin{enumerate}
\item[]
\item
Let $\R$ be a bimonoidal category and let $G$ be a discrete group. A
\emph{$G$-action on $\R$} consists of a functor $\phi_g \colon \R
\ra \R$ for every $g \in G$, such that every $\phi_g$ is a strict
bimonoidal functor and
$$ \phi_1 = \id, \, \phi_g \circ \phi_h = \phi_{gh}, \text{ for all }
g,h \in G.$$
\item
For a bimonoidal category with anti-involution we require each
$\phi_g$ in addition to be a morphism of bimonoidal categories with
anti-involution according to Definition \ref{def:morbimantiinv}.
\end{enumerate}
\end{defn}
\begin{ex} \label{ex:vect}
The bipermutative category of complex vector spaces,
$\mathcal{V}_\mathbb{C}$, with objects the natural numbers with zero
and morphisms
$$ \mathcal{V}_\mathbb{C}(n,m) = \left\{\begin{array}{cc}
\varnothing & n \neq m\\
U(n) &  n=m
\end{array} \right. $$
carries a $\mathbb{Z}/2\mathbb{Z}$-action. On objects the action is
trivial, and on morphisms it is given by complex conjugation of
unitary matrices. Note that the action is non-trivial on the
endomorphisms $U(1)$ of the multiplicative unit.
\end{ex}
\begin{ex}
Let $A \ra B$ be a $G$-Galois extension of commutative rings in the
sense of \cite{CHR}. We can consider the discrete bipermutative
categories $\R_A$ and $\R_B$ as in \ref{ex:rings}. Then $\R_B$ is a
bipermutative category with $G$-action.
\end{ex}
\begin{defn}
For a bimonoidal category $\R$ with $G$-action, the \emph{$G$-fixed
category} is the subcategory of $\R$ containing all objects and
morphisms that are fixed under every $\phi_g, g \in G$. We denote
this category by $\R^G$.
\end{defn}
The following result is straightforward to see.
\begin{lem}
The $G$-fixed category of a strict $G$-action on a bimonoidal
category (with anti-involution) is again a bimonoidal category (with
anti-involution).
\end{lem}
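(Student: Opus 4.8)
The plan is to verify directly that the inherited operations make $\R^G$ a strict bimonoidal category, with all structure data simply restricted from $\R$. The only real content is to check that $\R^G$ is closed under $\oplus$ and $\otimes$ at the level of both objects and morphisms, and that each structure isomorphism of $\R$ is itself a $G$-fixed morphism, so that it lands in $\R^G$. Once this is done every coherence axiom is automatically inherited, since each such axiom is an equation between morphisms that already holds in $\R$ and $\R^G$ is a subcategory closed under composition.

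First I would establish closure. Since each $\phi_g$ is a strict bimonoidal functor we have $\phi_g(A \oplus B) = \phi_g(A) \oplus \phi_g(B)$ and $\phi_g(A \otimes B) = \phi_g(A) \otimes \phi_g(B)$ on the nose, together with $\phi_g(0_\R) = 0_\R$ and $\phi_g(1_\R) = 1_\R$. Hence if $A$ and $B$ are fixed by all $\phi_g$, then so are $A \oplus B$, $A \otimes B$, and the units $0_\R$, $1_\R$; the same computation applied to morphisms shows that $\oplus$ and $\otimes$ restrict to bifunctors on $\R^G$. This supplies the two bifunctors and the two unit objects of the putative bimonoidal structure.

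Next I would show that the structure isomorphisms restrict, where strictness is the key input. For fixed $A, B$ we have $\phi_g(c_\oplus^{A,B}) = c_\oplus^{\phi_g(A), \phi_g(B)} = c_\oplus^{A,B}$, so $c_\oplus^{A,B}$ is $G$-fixed and lies in $\R^G$; the identical argument, using that $\phi_g$ preserves $d_\ell = \id$ and $d_r$ strictly, shows the distributivity isomorphisms are fixed. The coherence diagrams for $\R^G$ are then exactly the corresponding diagrams in $\R$ restricted to fixed data, and they commute because they commute in $\R$. This proves the first assertion.

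For the anti-involution refinement, each $\phi_g$ is in addition a morphism of bimonoidal categories with anti-involution, so $\phi_g \circ \zeta = \zeta \circ \phi_g$. Thus $\zeta$ carries fixed objects to fixed objects and restricts to an involutive functor $\zeta \colon \R^G \ra \R^G$ with $\zeta \circ \zeta = \id$. It remains to see that $\mu_{A,B}$ is fixed for fixed $A, B$: because $\phi_g$ is strict, the lax structure maps $\lambda$ of \eqref{eq:lambdas} are identities and $\phi_g \circ \zeta = \zeta \circ \phi_g$, so the compatibility square of Definition \ref{def:morbimantiinv} collapses to the equality $\phi_g(\mu_{B,A}) = \mu_{\phi_g(B), \phi_g(A)}$. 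Evaluating on fixed $A, B$ gives $\phi_g(\mu_{B,A}) = \mu_{B,A}$, so $\mu_{B,A}$ lies in $\R^G$, and the four axioms of Definition \ref{def:antiinv} hold for the restricted data because they hold in $\R$. The main, and essentially only, obstacle is the bookkeeping in this last point: one must unwind the compatibility diagram of Definition \ref{def:morbimantiinv} and use strictness to conclude that it forces $\mu$ to be $G$-equivariant. Everything else follows immediately from strictness of the $\phi_g$.
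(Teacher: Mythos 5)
Your verification is correct and is exactly the routine check the paper has in mind: it states the lemma with the remark that it ``is straightforward to see'' and gives no proof, so your write-up simply supplies the omitted details (closure under $\oplus$, $\otimes$ and the units via strictness, $G$-invariance of $c_\oplus$, $d_r$, $\zeta$ and $\mu$, and inheritance of the coherence axioms). In particular your use of the compatibility square of Definition \ref{def:morbimantiinv}, collapsed by strictness to $\phi_g(\mu_{B,A})=\mu_{\phi_g(B),\phi_g(A)}$, is the right way to see that $\mu$ restricts to $\R^G$.
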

\begin{ex}
If $R$ is a ring with a $G$-action, then the $G$-fixed category of
$\R_R$ is the bimonoidal category associated to the $G$-fixed
subring of $R$.
\end{ex}
\begin{ex}
For the category $\mathcal{V}_{\mathbb{C}}$ the
$\mathbb{Z}/2\mathbb{Z}$-fixed category is the bipermutative
category of real vector spaces, $\mathcal{V}_{\mathbb{R}}$, whose
objects are again the natural numbers, but whose morphisms are given
by
$$ \mathcal{V}_\mathbb{R}(n,m) = \left\{\begin{array}{cc}
\varnothing & n \neq m\\
O(n) &  n=m.
\end{array} \right. $$
 \end{ex}

Note, that the homotopy fixed point spectrum
$H\mathcal{V}_{\mathbb{C}}^{h\mathbb{Z}/2\mathbb{Z}}$ is
$ku^{h\mathbb{Z}/2\mathbb{Z}}$ and this is not equivalent to the
associated spectrum $ko = H\mathcal{V}_{\mathbb{R}}$. In the case of
Eilenberg-Mac\,Lane spectra, however,  we obtain that
$$ HR^{hG} = H\R_R^{hG} \simeq H(R^G) = H\R_{R^G}.$$
Moreover, if $A \ra B$ is a $G$-Galois extension of commutative
rings, then  $HA = H\R_A \ra H\R_B = HB$ is a $G$-Galois extension
of commutative $S$-algebras in the sense of Rognes \cite[proposition
4.2.1]{Ro}.

\begin{prop}
Let $\R$ be a (symmetric) bimonoidal category with $G$-action. Then
the weak equivalence \cite[theorem 1.1]{BDRR}
$$ \mathcal{K}(\R) \simeq K(H\R)$$
is $G$-equivariant.
\end{prop}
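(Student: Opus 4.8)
The plan is to derive equivariance entirely from the \emph{naturality} of the equivalence of \cite[theorem 1.1]{BDRR} with respect to strict bimonoidal functors. A $G$-action on $\R$ is the same datum as a functor from $G$, viewed as a one-object groupoid, into the category of (small) bimonoidal categories and strict bimonoidal functors. Hence, once we know that each construction entering $\K(\R) \simeq K(H\R)$ is functorial for strict bimonoidal functors and that the comparison is a natural chain of weak equivalences, applying this to the action functor will immediately produce compatible $G$-actions on both sides together with an equivariant equivalence between them.

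First I would record the $G$-action on the combinatorial side. Applying $\phi_g$ coordinatewise gives an endofunctor $M_n(\phi_g)$ of $M_n(\R)$ which is strictly monoidal for matrix multiplication, because $\phi_g$ strictly preserves $\oplus$ and $\otimes$, and which preserves weak invertibility, because the induced endomorphism of the rig $\pi_0(\R)$ is a homomorphism and so carries $GL_n(\pi_0\R)$ into itself. Thus $\phi_g$ restricts to a strict monoidal endofunctor of $GL_n\R$, compatible with the block-sum maps $\oplus$, and after taking bar constructions, realizing, and group completing it induces a self-map of $\K(\R)$. The relations $\phi_1 = \id$ and $\phi_g \circ \phi_h = \phi_{gh}$ are preserved on the nose at every stage, so we obtain a \emph{strict} $G$-action on $\K(\R)$. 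Dually, since $\R \mapsto H\R$ is functorial for strict bimonoidal functors, each $\phi_g$ yields a map of ring spectra $H\phi_g$ with $H\phi_1 = \id$ and $H\phi_g \circ H\phi_h = H\phi_{gh}$, and applying $K$ gives a strict $G$-action on $K(H\R)$.

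It then remains to intertwine the two actions. The equivalence of \cite[theorem 1.1]{BDRR} is assembled as a finite zig-zag of maps, each natural in the bimonoidal category with respect to strict bimonoidal functors; naturality applied to $\phi_g$ gives, for every $g$, a (homotopy-)commutative square
$$\xymatrix{
{\K(\R)} \ar[r]^{\simeq} \ar[d]_{\K(\phi_g)} & {K(H\R)} \ar[d]^{K(H\phi_g)}\\
{\K(\R)} \ar[r]^{\simeq} & {K(H\R)}
}$$
and, more precisely, a zig-zag of such squares in which every horizontal map is a weak equivalence. Composing the entire natural zig-zag with the action functor therefore exhibits $\K(\R) \simeq K(H\R)$ as a zig-zag of $G$-equivariant maps that are levelwise weak equivalences, which is exactly what it means for the equivalence to be $G$-equivariant.

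The main obstacle is the final step: one must inspect the construction of \cite[theorem 1.1]{BDRR} and check that every intermediate functor and every comparison map in it is genuinely natural for strict bimonoidal functors, so that the $G$-action threads through the zig-zag without obstruction. Because $G$ is discrete and acts by \emph{strict} functors, the induced actions on all objects in sight are strict rather than merely coherent, so no genuine equivariant homotopy theory is required; the content is the bookkeeping that the cited chain of equivalences is natural and that a levelwise weak equivalence of (naive) $G$-objects is an equivariant weak equivalence.
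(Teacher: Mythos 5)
Your proposal is correct and follows essentially the same route as the paper, whose entire proof is the observation that all constructions entering \cite[theorem 1.1]{BDRR} are natural with respect to lax (symmetric) bimonoidal functors; your argument is just a more detailed unwinding of that naturality, specialized to the strict functors $\phi_g$ constituting the $G$-action. The only caveat, which you correctly flag as the remaining bookkeeping, is that the verification of naturality for every stage of the zig-zag is exactly the content the paper delegates to \cite{BDRR}.
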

\begin{proof}
All constructions involved in the proof of \cite[theorem 1.1]{BDRR}
are natural with respect to lax (symmetric) bimonoidal functors.
\end{proof}
\begin{rem}
As $G$-actions on bimonoidal categories with anti-involution are
given in terms of morphisms of such categories, they can be combined
with the external involution on the bar construction for $\K(\R)$.
\end{rem}

\section{Examples} \label{sec:ex}

\subsection{Endomorphisms of a permutative category}
Let $(\mathcal{C}, \oplus, 0_\mathcal{C}, c_\oplus)$ be any permutative
category. Consider the category of all lax symmetric mo\-noi\-dal
functors from $\mathcal{C}$ to itself. Elmendorf and Mandell
\cite[p.~176]{EM} describe how to impose a bimonoidal structure on
this category. We denote this category by
$\mathrm{End}(\mathcal{C})$. The addition is given ``pointwise'',
\ie, for two lax symmetric monoidal functors $F, G \colon
\mathcal{C} \ra \mathcal{C}$ one defines
$$ (F \oplus G)(C) = F(C) \oplus G(C). $$
The multiplicative structure is given by composition.

If we consider the full subcategory of $\mathrm{End}(\mathcal{C})$
of invertible lax symmetric monoidal functors and we take the
bimonoidal subcategory of $\mathrm{End}(\mathcal{C})$ generated by
these under direct sum and composition which we call
$\mathrm{Inv}(\mathcal{C})$. One might think of
$\mathrm{Inv}(\mathcal{C})$ as the \emph{group-rig} of the category
$\mathcal{C}$. We can define an involution on
$\mathrm{Inv}(\mathcal{C})$ by sending a generator $F \in
\mathrm{Inv}(\mathcal{C})$ to its inverse
$$ \zeta(F) = F^{-1}$$
and prolonging this involution to finite words (under $\oplus$ and
$\circ$) in such functors. For instance, we have $$ \zeta(G_1 \oplus
G_2) = G_1^{-1} \oplus G_2^{-1}. $$

As we have
$$ (G \circ F)^{-1} = F^{-1} \circ G^{-1}$$ we can choose $\mu$ to
be the identity.

Group actions on (symmetric) bimonoidal categories provide
non-trivial examples. If a discrete group $G$ acts on a (symmetric)
bimonoidal category $\R$, then the elements of the group are objects
of the category $\mathrm{Inv}(\R)$. For instance the category of
complex vector spaces $\mathcal{V}_\mathbb{C}$ with its
$\mathbb{Z}/2\mathbb{Z}$-action gives rise to a non-trivial category
 $\mathrm{Inv}(\mathcal{V}_{\mathbb{C}})$.

 If $R$ is a ring with  $G$-action, then the category $F(R)$ with
 objects $\mathbf{n} \in \mathbb{N}_0$ and morphisms the
 $R$-automorphisms of $R^n$ is a bimonoidal category with
 $G$-action. The action is trivial on objects and it sends an
 automorphism $\varphi$ to $g\varphi$ for $g \in G$ where $g\varphi$
 is the morphism that sends $v \in R^n$ to $g\varphi(v)$.

\subsection{Hopf-bimodules} \label{subsec:hopfbim}
Categories of Hopf-bimodules  provide a class of examples of
(non-strict) braided
bimonoidal categories. Consider a Hopf algebra $H$ in a symmetric
monoidal category. An object $M$ is an $H$ Hopf-bimodule if
it is a bimodule over $H$ and simultaneously a $H$ right- and
left-comodule such that the comodule structure maps are morphisms of
$H$-bimodules. Here, the diagonal on $H$ gives the $H$-bimodule
structure on $H\otimes M$ and $M \otimes H$. Schauenburg showed
\cite[theorem 6.3]{Sch} that the category of $H$-Hopf-bimodules,
${}^H_H\mathcal{M}_H^H$, is a braided monoidal category with the
tensor product over $H$, if the antipode of the Hopf algebra $H$ is
invertible, and that the category ${}^H_H\mathcal{M}_H^H$ is
equivalent to the category of right Yetter-Drinfel'd $H$-modules
\cite[theorem 5.7 (3)]{Sch} if the underlying category has
equalizers.

Let us consider the symmetric monoidal category of $k$-modules for
some commutative ring with unit, $k$, and the direct sum as the
additive structure. Unadorned tensor products are tensor products
over $k$. The category of $H$-bimodules, ${}_H\mathcal{M}_H$, over a
Hopf algebra $H$ is then a (non strict) bimonoidal category with the
direct sum of $k$-modules as additive and the tensor product over
$H$ as multiplicative structure. The direct sum of two $k$-modules
$A, B \in {}_H\mathcal{M}_H$ is an $H$-bimodule if we declare the
structure maps to be
$$\xymatrix@1{ {H \otimes (A \oplus B)}
\ar[r]^{d_r^{-1}} & {H \otimes A \oplus H \otimes B} \ar[r] & {A
\oplus B}}$$ and
$$\xymatrix@1{ {(A \oplus B) \otimes H} \ar[r]^{d_\ell^{-1}} &
{A \otimes H \oplus B \otimes H} \ar[r] & {A \oplus B.}}$$
Here, $d_r$ and $d_\ell$ denote the distributivity isomorphisms in
the underlying category of $k$-modules, \ie,
$$ d_r \colon A \otimes B \oplus A \otimes B' \ra A \otimes (B \oplus
B'), \quad d_\ell \colon A \otimes B \oplus A' \otimes B \ra (A
\oplus A') \otimes B.$$

Similarly, the left and right comodule structures on $A$ and $B$,
$\psi_A, \psi^A$ resp.~ $\psi_B,\psi^B$, give rise to a left and a
right comodule structure on the sum via
$$\xymatrix@1{{A \oplus B} \ar[rr]^(0.4){\psi_A \oplus \psi_B} &
& {H \otimes A \oplus H \otimes B} \ar[r]^{d_r} & {H \otimes (A
\oplus B)}}
$$
and
$$\xymatrix@1{{A \oplus B} \ar[rr]^(0.4){\psi^A \oplus \psi^B} & &
{A \otimes H \oplus B \otimes H} \ar[r]^{d_\ell} & {(A \oplus B)
\otimes H.}}
$$
It is tedious but straightforward to check that the coherence
isomorphisms of the bimonoidal category of $H$-bimodules are
actually morphisms of comodules. The explicit form of the braiding
from \cite[theorem 6.3]{Sch} allows it to check that condition
\eqref{eq:braideddistr} of Definition \ref{def:brbim} is indeed
satisfied and that Laplaza's distributivity axioms \cite[section
1]{L} are satisfied with the braiding $\beta$ replacing the
multiplicative twist.

\subsection{Involutions on $A(*)$ and $A(BBG)$}
Let $\mathcal{E}$ denote the bipermutative category of finite sets
whose objects are the finite sets $\mathbf{n} = \{1,\ldots,n\}$ for
$n \in \mathbb{N}_0$. By convention $\mathbf{0}$ is the empty set.
The morphisms in $\mathcal{E}$ are
$$ \mathcal{E}(\mathbf{n},\mathbf{m}) = \left\{
\begin{array}{cc}
\varnothing & n \neq m\\
\Sigma_n & n = m.
\end{array}
\right.$$ For the full structure see \cite[VI, Example 5.1]{M} or   
\cite[Example 2.4]{BDRR}.  
Its associated spectrum is the sphere spectrum and thus
the equivalence from \cite[theorem 1.1]{BDRR} identifies
$\K(\mathcal{E})$ with the algebraic $K$-theory of the sphere
spectrum, $K(S)$, which in turn is equivalent to Waldhausen's
$A$-theory of a point, $A(*)$. Steiner constructed an involution on
$A(X)$ for all spaces $X$ in \cite[theorem 3.10]{St} where he used
the model for $A(X)$ that consists of the algebraic $K$-theory of
the spherical group ring of $\Omega X$, $K(S[\Omega X])$. He defined
his involution as the composition of loop inversion, matrix
transposition and reversal of multiplication which in our context is
taken care of by the reflection map on the bar construction. Thus
our definition of the involution on $\K(\mathcal{E})$ yields a
definition that resembles his. Another description of involutions on 
Waldhausen's $K$-theory of spaces is due to Vogell \cite{V}. 
For a construction of spectrum level
involutions on $S[\Omega M]$ for manifolds $M$ see \cite{K}. 

John Rognes drew my attention to the example of finite free
$G$-sets and $G$-equivariant bijections. For a group  $G$ we consider
the following small version of this category.
We define the category $\mathcal{E}G$ whose objects are again
the finite sets $\mathbf{n} = \{1,\ldots,n\}$ for $n \in
\mathbb{N}_0$ with $\mathbf{0} = \varnothing$ and whose morphisms are given by
$$ \mathcal{E}G(\mathbf{n},\mathbf{m}) = \left\{
\begin{array}{cc}
\varnothing & n \neq m\\
G \wr \Sigma_n & n = m
\end{array}
\right.$$ The classifying space $B(\mathcal{E}G)$ is
\begin{equation} \label{eq:class}
 \bigsqcup_{n\geq 0} B(G \wr \Sigma_n) = \bigsqcup_{n\geq 0} BG^n
 \times_{\Sigma_n} E\Sigma_n. 
\end{equation}

For an abelian group $G$ we define a bipermutative structure on
$\mathcal{E}G$ as  follows. On
objects, we take the bipermutative structure \cite[example
2.4]{BDRR}), and on morphisms we define
$$ (g_1,\ldots, g_n, \sigma) \oplus (g'_1,\ldots, g'_m,\sigma') = 
(g_1, \ldots, g_n, g'_1, \ldots, g'_m, \sigma \oplus \sigma').
$$ 
for $(g_1,\ldots, g_n,\sigma) \in G \wr \Sigma_n$ and 
$(g'_1,\ldots, g'_m, \sigma') \in G \wr \Sigma_m$. 

There are natural isomorphisms
$$c^G_\oplus\colon (g_1,\ldots, g_n, \sigma) \oplus
(g'_1,\ldots, g'_m, \sigma') \ra (g'_1,\ldots, g'_m, \sigma')
\oplus(g_1,\ldots, g_n, \sigma) $$
for all $(g_1,\ldots, g_n, \sigma)$ and $(g'_1,\ldots, g'_m, \sigma')$
that use the additive twist $c_\otimes$ from the structure of $\E$ and
that shuffle the $g_i$ and $g'_j$. It is straightforward to check,
that  $(\E G, \oplus, \mathbf{0},
c^G_\oplus)$ is a permutative category. 

As a multiplicative structure we define 
$$ (g_1,\ldots, g_n, \sigma) \otimes (g'_1,\ldots, g'_m, \sigma') = 
(g_1g'_1, \ldots, g_1g'_m, \ldots, g_ng'_1, \ldots, g_ng'_m, \sigma
\otimes \sigma').
$$ 
Note that for this construction to be natural, the group $G$ has to be
abelian. We can compare $ (g_1,\ldots, g_n, \sigma) \otimes
(g'_1,\ldots, g'_m, \sigma')$ with $(g'_1,\ldots, g'_m, \sigma') \otimes
(g_1,\ldots, g_n, \sigma)$ by using natural isomorphisms 
$c^G_\otimes$ which are built out of the multiplicative twist
$c_\otimes$ from $\E$ and which reorder arrays like  
$(g_1g'_1, \ldots, g_1g'_m, \ldots, g_ng'_1,
\ldots, g_ng'_m)$ to $(g'_1g_1, \ldots, g'_1g_n, \ldots, g'_mg_1,
\ldots, g'_mg_n)$. 
 
With this multiplicative structure $(\E G, \otimes, \mathbf{1},
c^G_\otimes)$ is a permutative category and the multiplicative and
additive structure combine to turn $\E G$ into a bipermutative
category.  

We can define an anti-involution on $\mathcal{E}G$ by declaring
$\zeta$ to be the identity on objects and on morphisms we define
$\zeta(g_1,\ldots, g_n, \sigma) =
(g_1^{-1}, \ldots, g_n^{-1}, \sigma)$ for all $g_i\in G$ and
permutations $\sigma$.
Then $\zeta$ is strictly additive and we can use
the multiplicative twist $c^G_\otimes$ in $\mathcal{E}$ as $\mu$ in
order to obtain natural isomorphisms $\mu$ from 
$\zeta((g'_1, \ldots, g'_m, \sigma') \otimes (g_1,\ldots, g_n, \sigma))$
to $\zeta(g_1,\ldots, g_n, \sigma) \otimes \zeta(g'_1, \ldots,
g'_m, \sigma')$. 

Barratt \cite{Ba} defined his functor $\Gamma^+$ for based
simplicial sets $X$ and identified its geometric realization with 
$\Omega^\infty \Sigma^\infty |X|$. For $|X| = BG_+$ we obtain that
$\Omega^\infty \Sigma^\infty BG_+$ is the infinite loop
space associated to the spectrum $H(\E G)$ and therefore this ring
spectrum is the spherical group ring $S[BG] =
\Sigma^\infty_+(BG)$. Its algebraic $K$-theory is Waldhausen's
$K$-theory $A(BBG) = K(S[\Omega BBG]) \sim K(S[BG])$.   

For $G$ abelian, the inverse map on $G$ induces the inverse map on
$BG$ and via the map of $H$-spaces $BG \stackrel{\sim}{\ra} \Omega
BBG$ this is related to loop inversion.
Hence in this sense the induced involution on $\K(\mathcal{E}G) \simeq
A(BBG)$ corresponds to Steiner's involution on $A(BBG)$.

\section{Non-triviality}

Farrell and Hsiang \cite[Lemma 2.4]{FH} calculated the effect of the
involution of $K_i(\mathbb{Z}) \otimes \mathbb{Q}$: elements in
positive degrees are sent to their additive inverse. We use this
fact to prove the following.

\begin{prop}
The involutions on $\K(\V) \sim K(ku)$, $\K(\V_{\mathbb{R}}) \sim
K(ko)$, $\K(\E G) \sim A(BBG)$ ($G$ abelian) are non-trivial.
\end{prop}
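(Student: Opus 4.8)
The plan is to reduce each of the three non-triviality statements to the single computational input of Farrell and Hsiang, namely that the involution on $K_i(\Z)\otimes\Q$ is multiplication by $-1$ in positive degrees, by exploiting Corollary \ref{cor:pi0} together with the naturality Proposition \ref{prop:natural}. First I would compute the rig of path components in each case. For $\V=\V_\C$ the endomorphisms of the unit are $U(1)$ and complex conjugation is built into the anti-involution, but $\pi_0(\V_\C)=\N_0$ with trivial involution; similarly $\pi_0(\V_\R)=\N_0$ and $\pi_0(\E G)=\N_0[G]$ with the involution induced by $g\mapsto g^{-1}$. In each case the group completion of $\pi_0$ is $\Z$ (or $\Z[G]$), so Corollary \ref{cor:pi0} gives a map of spaces with involution
\begin{equation*}
\K(\R)\lra \K(\R_{\pi_0(\R)})\simeq K^f(Gr(\pi_0(\R)))
\end{equation*}
that is equivariant, and the target is (free) algebraic $K$-theory of $\Z$ respectively $\Z[G]$.

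The key step is then to produce an actual homotopy class in positive degree that maps to something detected rationally. For $K(ku)$ and $K(ko)$ I would use the unit map $S\to ku$ (resp.\ $S\to ko$) and the trace or linearization $ku\to H\Z$, giving on $K$-theory a factorization $K(S)\to K(ku)\to K(\Z)$ compatible with involutions; since the composite is rationally split surjective onto the image of $K_*(\Z)$ in positive degrees, and Farrell--Hsiang shows the involution acts by $-1$ there, the involution on $\K(\V)$ cannot be the identity. For $\E G$ I would instead use the augmentation $\Z[G]\to\Z$ (or the inclusion of the unit component $\E\hookrightarrow\E G$) which is a map of rigs with anti-involution, hence induces an equivariant map $A(BBG)\simeq\K(\E G)\to K(\Z)$ hitting the relevant positive-degree classes. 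The point throughout is that equivariance lets me transport the known nontriviality on $K_*(\Z)\otimes\Q$ back to the spectra in question.

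The main obstacle I expect is verifying that the auxiliary maps $S\to ku$, $ku\to H\Z$, and the augmentation $\E G\to \E$ are genuinely compatible with the combinatorial involution $\tau$ constructed in Section \ref{sec:barconstr}, rather than merely with some abstractly defined involution. For the linearization and augmentation this should follow from Proposition \ref{prop:natural} once one checks they are morphisms of bimonoidal categories with anti-involution in the sense of Definition \ref{def:morbimantiinv}; the delicate point is that the unit $S\to ku$ does not obviously come from such a categorical morphism, so one may instead have to argue at the level of $\K(\E)\to\K(\V_\C)$ using that $\E$ sits inside $\V_\C$ as the permutation matrices with trivial $\mu$, and then feed in that $\K(\E)\simeq A(*)\simeq K(S)\to K(\Z)$ carries the Farrell--Hsiang nontriviality. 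A secondary subtlety is ensuring the positive-degree class survives rationalization after passing through all these maps, i.e.\ that the relevant map on homotopy is rationally nonzero in the degree where the involution acts by $-1$; this is where I would invoke the known rational splittings $K(\Z)\to K(ku)$ and the rational computation of $K_*(\Z)$ to guarantee a surviving class.
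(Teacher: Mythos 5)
Your proposal is correct and follows essentially the same route as the paper: an equivariant linearization map to $K^f(\mathbb{Z})$ (realized categorically as $\V\to\R_{\mathbb{Z}}$, i.e.\ the $\pi_0$-map of Corollary \ref{cor:pi0}), the Ausoni--Rognes rational splittings for $ku$ and $ko$, the retraction $\E\to\E G\to\E$ with $A(*)\simeq_{\mathbb{Q}}K(\mathbb{Z})$ for $A(BBG)$, and Farrell--Hsiang as the sole computational input. Your worry about the unit $S\to ku$ is resolved exactly as you suggest: the paper models it by the inclusion $\E\to\V$ of permutation matrices, which is a morphism of bipermutative categories with anti-involution.
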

\begin{proof}
We can model the map $\pi\colon ku \rightarrow H\mathbb{Z}$ of
commutative ring spectra on the level of bipermutative categories
$\pi\colon \V \rightarrow \R_{\mathbb{Z}}$ by sending an object
$\mathbf{n}$ to the natural number $n$ and projecting the set $U(n)$
of endomorphisms of $\mathbf{n}$ to the set $\{\id\}$. This is a
morphism of bipermutative categories with anti-involution. On the level
of $K$-theory we obtain an induced map
$$ K(ku) \simeq \K(\V) \stackrel{\K(\pi)}{\lra} \K(\R_{\mathbb{Z}}) \simeq
K^f(\mathbb{Z}). $$

Ausoni and Rognes show \cite[Theorem 2.5 (a)]{AR} that rationally
the map $K(\pi) \colon K(ku) \ra K(\mathbb{Z})$  is split. As the
involution is non-trivial on $K_{*>0}(\mathbb{Z}) \otimes
\mathbb{Q}$, it is non-trivial on $K(ku)$. Similarly, they show that
rationally $K(\mathbb{Z})$ splits off $K(ko)$.

Consider the following diagram of bipermutative categories with
anti-involution:
$$\xymatrix{
{\E G} \ar@/_/[d] &{}&{}\\
{\E} \ar[dr]_{\pi} \ar@/_/[u] \ar[r] \ar@/^1pc/[rr]&{\V}
 \ar[d]^{\pi}&
{\V_{\mathbb{R}}}  \ar[dl]^{\pi}\\
{}&{\R_{\mathbb{Z}}}&{}
}$$
The maps from $\E$ model the unit map from the sphere spectrum
$S \sim H\E$ to $A(BBG)$, $ko$ and $ku$ and are given by the identity on
objects and the inclusion of $\Sigma_n$ into the respective endomorphisms
of $\mathbf{n}$.

Rationally, $A(*) \sim \K(\E)$ agrees with $K(\mathbb{Z})$ and it
splits off $A(BBG)$, so the involution is not trivial on $A(BBG)$.
\end{proof}

Ausoni and Rognes also proved in \cite{AR} that rationally
$A(K(\mathbb{Z}, 3))$ is equivalent to $K(ku)$. A map of ring
spectra $A(K(\mathbb{Z}, 3)) \ra K(ku)$ is given by using the string
of maps
$$ BU(1) \rightarrow BU_\otimes \ra GL_1(ku) \ra \Omega^\infty(ku)$$
and taking the adjoint which is a map from the suspension ring
spectrum $\Sigma^\infty_+ BU(1) \sim S[BU(1)]$ to $ku$. This yields
an induced map on algebraic $K$-theory $K(S[BU(1)]) \ra K(ku)$. We
can model this via a functor of categories
$$ F \colon \E \mathbb{S}^1 \rightarrow \V.$$
Here, $F$ sends $\mathbf{n}$ to $\mathbf{n}$ and maps a morphism
$(z_1,\ldots,z_n, \sigma) \in  \mathbb{S}^1 \wr \Sigma_n$ to the matrix
$\mathrm{diag}(z_1,\ldots,z_n) \cdot E_\sigma \in U(n)$ where diag denotes the
corresponding diagonal matrix and $E_\sigma$ is the permutation
matrix associated to $\sigma$. The fact that 
$$E_\sigma \cdot
\mathrm{diag}(w_1,\ldots,w_n) =
\mathrm{diag}(w_{\sigma^{-1}(1)},\ldots,w_{\sigma^{-1}n}) \cdot
E_\sigma$$
for $w_i \in \mathbb{S}^1$ ensures the naturality of $F$. 

As the diagram 
$$\xymatrix{
{(\mathbb{S}^1 \wr \Sigma_n) \times (\mathbb{S}^1 \wr \Sigma_m)}
\ar[r] \ar[d]_{\oplus} & {U(n) \times U(m)} \ar[d]^{\oplus} \\
{\mathbb{S}^1 \wr \Sigma_{n+m}} \ar[r] & {U(n+m)}
}$$
commutes, we see that $F$ respects addition. If $(e_1,\ldots, e_n)$
and $(f_1, \ldots, f_m)$ are ordered bases for $\mathbb{C}^n$
respectively $\mathbb{C}^m$, then we choose 
$$ (e_1 \otimes f_1, \ldots, e_1 \otimes f_m, \ldots, e_n \otimes f_1,
\ldots, e_n \otimes f_m)$$
as an ordered basis for $\mathbb{C}^{nm}$. With this convention, $F$
respects $\otimes$ as well. 

However, $F$ is not a functor of bimonoidal categories with
anti-involution if we choose the anti-involution $(\id, c_\otimes)$ on
$\V$  coming from its bipermutative structure. 

Consider the $\Z/2\Z = \langle \xi \rangle$-action on $\V$ from
Example \ref{ex:vect}. 
\begin{lem}
The composition $\bar{\zeta}$ of the anti-involution $(\id, c_\otimes)$
on $\V$  with the group action of $\Z/2\Z$ is an anti-involution on $\V$. 
\end{lem}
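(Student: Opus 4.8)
The plan is to verify directly that the pair $(\bar\zeta,\bar\mu)$ satisfies every clause of Definition \ref{def:antiinv}. First I would fix the data: since the original anti-involution on $\V$ has $\zeta=\id$, the composite functor is simply $\bar\zeta=\phi_\xi$, which is the identity on objects and complex conjugation $M\mapsto\bar M$ on unitary matrices; for the structure isomorphism I would take $\bar\mu_{A,B}:=\phi_\xi(c_\otimes^{A,B})\colon\bar\zeta(A\otimes B)\to\bar\zeta(B)\otimes\bar\zeta(A)$, which makes sense because $\phi_\xi$ is a strict bimonoidal functor and so commutes strictly with $\otimes$ on objects.

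The key observation I would use is that every structure isomorphism occurring in Definition \ref{def:antiinv} — the twists $c_\oplus$ and $c_\otimes$ and the distributivity map $d_r$ — is realised on morphisms by a permutation matrix with entries in $\{0,1\}$, hence by a real matrix, and is therefore fixed by complex conjugation. In particular $\bar\mu_{A,B}=\phi_\xi(c_\otimes^{A,B})=c_\otimes^{A,B}$. At the same time, because $\xi$ has order two and $\zeta=\id$, one has $\bar\zeta\circ\bar\zeta=\phi_\xi\circ\phi_\xi=\phi_{\xi^2}=\id$, which supplies the involutivity of the underlying functor.

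With these two facts in hand, each of the coherence conditions reduces to the corresponding statement for the already established bipermutative anti-involution $(\id,c_\otimes)$. On objects $\bar\zeta$ is the identity, so the associativity hexagon of condition (3) and the two distributivity squares of condition (4) become literally the diagrams one writes down for $(\id,c_\otimes)$; all of their edges are structure maps and hence conjugation-invariant, so they commute by the bipermutative coherence recorded in Lemma \ref{lem:ybe} and in the braided distributivity law \eqref{eq:braideddistr}. Condition (1) holds since $\phi_\xi$ is strict symmetric monoidal for $\oplus$ (conjugation respects block sums and fixes the real twist $c_\oplus$), and condition (2) holds because the unit object is fixed and $c_\otimes^{1_\R,A}=\id=c_\otimes^{A,1_\R}$.

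The main obstacle I anticipate is the single point where the nontrivial conjugation on morphisms genuinely intervenes rather than being absorbed into real structure maps, namely the naturality of $\bar\mu$: the naturality square for morphisms $f,g$ carries vertical legs $\bar\zeta(f\otimes g)=\bar f\otimes\bar g$ and $\bar\zeta(g)\otimes\bar\zeta(f)=\bar g\otimes\bar f$, and one must check it commutes by applying naturality of $c_\otimes$ to the conjugated morphisms $\bar f,\bar g$. Conceptually this amounts to the assertion that $\phi_\xi$ is itself a morphism of bimonoidal categories with anti-involution in the sense of Definition \ref{def:morbimantiinv}, so that precomposing the anti-involution $(\id,c_\otimes)$ with the commuting involutive automorphism $\phi_\xi$ again produces an anti-involution; once naturality is secured, the remaining verifications are routine quotations of the bipermutative case.
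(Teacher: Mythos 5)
Your proposal is correct and follows essentially the same route as the paper's own (very terse) proof: check $\bar\zeta\circ\bar\zeta=\id$ from $\xi^2=\id$ and $\zeta=\id$, take $\bar\mu=c_\otimes$, verify naturality against conjugated morphisms, and observe that the coherence and distributivity diagrams reduce to the already-established bipermutative case. Your additional remark that all structure isomorphisms of $\V$ are permutation matrices and hence fixed by conjugation is a clean way of packaging what the paper dismisses as ``the remaining axioms are easy to check.''
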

\begin{proof}
If we set $\bar{\zeta} := \xi \circ \zeta$, then 
$$\bar{\zeta} \circ
\bar{\zeta} =  \xi \circ \zeta \circ  \xi \circ \zeta = \xi^2 \circ
\zeta^2 = \id$$
because $\xi$ and $\zeta$ commute. For two matrices $A \in U(n)$ and
$B \in U(m)$ we have that $c_\otimes$ sends $\bar{\zeta}(A \otimes B) =
\bar{A} \otimes \bar{B}$ to $\bar{B} \otimes \bar{A}$. The
distributivity constraint from Definition \ref{def:antiinv} just
express the fact that $d_r$ is given in terms of $d_\ell$ in $\V$. The
remaining axioms are easy to check. 
\end{proof}
\begin{cor}
The functor $F \colon \E \mathbb{S}^1 \ra \V$ is a morphism of
bimonoidal categories with anti-involution 
$$F\colon (\E \mathbb{S}^1, \zeta, c^G_\otimes) \lra (\V, \bar{\zeta},
c_\otimes).$$
\end{cor}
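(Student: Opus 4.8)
The plan is to verify directly the two extra conditions of Definition \ref{def:morbimantiinv}, since the surrounding discussion has already established that $F$ is a lax bimonoidal functor: it respects $\oplus$, and with the ordered basis $(e_i \otimes f_j)$ chosen above it respects $\otimes$, so the structure maps $\lambda^{A,B}$ of \eqref{eq:lambdas} are available and are, with that basis convention, identity matrices. It therefore remains to check that $F \circ \zeta = \bar{\zeta} \circ F$ and that the compatibility hexagon relating $\mu = c^G_\otimes$ on $\E \mathbb{S}^1$ and $\mu' = c_\otimes$ on $\V$ commutes.

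First I would treat the commutation $F \circ \zeta = \bar{\zeta} \circ F$. On objects all four functors are the identity, so both composites agree there. On a morphism $(z_1,\ldots,z_n,\sigma) \in \mathbb{S}^1 \wr \Sigma_n$ one computes
$$ F(\zeta(z_1,\ldots,z_n,\sigma)) = \mathrm{diag}(z_1^{-1},\ldots,z_n^{-1}) \cdot E_\sigma, $$
while
$$ \bar{\zeta}(F(z_1,\ldots,z_n,\sigma)) = \overline{\mathrm{diag}(z_1,\ldots,z_n)\cdot E_\sigma} = \mathrm{diag}(\bar{z}_1,\ldots,\bar{z}_n)\cdot E_\sigma. $$
These agree precisely because each $z_i$ lies on the unit circle, so $\bar{z}_i = z_i^{-1}$, and because the permutation matrix $E_\sigma$ has real entries, whence $\overline{E_\sigma} = E_\sigma$. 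This is exactly the point at which the composite anti-involution $\bar{\zeta} = \xi \circ \zeta$ from the previous lemma is forced upon us: the plain anti-involution $(\id, c_\otimes)$ would send $F(z_1,\ldots,z_n,\sigma)$ to $\mathrm{diag}(z_1,\ldots,z_n)\cdot E_\sigma$, which does not match $F(\zeta(-))$ unless one also conjugates.

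Finally I would check the compatibility diagram of Definition \ref{def:morbimantiinv} for $\mu = c^G_\otimes$ and $\mu' = c_\otimes$. For objects $A = \mathbf{n}$ and $B = \mathbf{m}$ every vertex of the hexagon is the object $\mathbf{nm}$, and with the chosen ordered basis the maps $\lambda$ are identity matrices, hence in particular real, so $\bar{\zeta}(\lambda) = \lambda$. The diagram then reduces to the assertion that $F$ carries the multiplicative twist $c^G_\otimes$ of $\E \mathbb{S}^1$ to the multiplicative twist $c_\otimes$ of $\V$. Both twists are built from the same underlying shuffle $c_\otimes$ of finite sets that reorders the index pairs $(i,j) \mapsto (j,i)$; since $F$ is defined through precisely this basis ordering and $G = \mathbb{S}^1$ is abelian, the circle coordinates $z_i w_j$ are merely permuted, and $F(c^G_\otimes) = c_\otimes$ follows.

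The main obstacle is the bookkeeping in this last step: one must track the $(i,j) \mapsto (j,i)$ reindexing on the $\mathbb{S}^1$-labels together with the permutation factors and confirm that the two composites around the hexagon agree. Because the twists are permutation, hence real, matrices, the conjugation introduced by $\bar{\zeta}$ does not interfere with them, so once this index matching is verified the diagram commutes and $F$ is a morphism of bimonoidal categories with anti-involution.
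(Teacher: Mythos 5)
Your verification is correct and matches what the paper intends: the paper states this corollary without proof, having set up exactly the two checks you perform (the identity $\bar{z}_i = z_i^{-1}$ on the unit circle forcing the twisted anti-involution $\bar{\zeta}$, and the strictness of $F$ on $\otimes$ via the lexicographic basis reducing the compatibility diagram to $F(c^G_\otimes) = c_\otimes$). Your write-up simply makes explicit the direct verification of Definition \ref{def:morbimantiinv} that the paper leaves to the reader.
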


\end{document}